\documentclass{amsart}
\usepackage{amssymb,amscd}
\usepackage{graphicx}

\usepackage{latexsym}
\usepackage{multirow}
\usepackage{setspace}

\usepackage{amsthm}
\newtheorem{theorem}{Theorem}

\newtheorem{lemma}{Lemma}[section]
\newtheorem{cor}{Corollary}
\newtheorem{prop}{Proposition}[section]

\newcommand{\res}[1]{\begin{array}[d]{l}\\{\rm Res}\\^{#1}\end{array}\hspace{-1mm}}

\newcommand{\bc}{\mathbb{C}}

\newcommand{\bp}{\mathbb{P}}

\newcommand{\br}{\mathbb{R}}
\newcommand{\bz}{\mathbb{Z}}
\newcommand{\modm}{\mathcal{M}}

\newcommand{\dd}{\mathcal{D}}
\newcommand{\f}{\mathcal{F}}

\newcommand{\fat}{\f\hspace{-.3mm}{\rm at}_{g,n}}

\begin{document}

\title{Polynomials representing Eynard-Orantin invariants}
\author{Paul Norbury and Nick Scott}
\address{Department of Mathematics and Statistics\\
University of Melbourne\\Australia 3010}
\email{pnorbury@ms.unimelb.edu.au\\ N.Scott@ms.unimelb.edu.au}
\keywords{}
\subjclass{MSC (2010) 32G15; 30F30; 05A15}
\date{}

\begin{abstract}

\noindent The Eynard-Orantin invariants of a plane curve are multilinear differentials on the curve.  For a particular class of genus zero plane curves these invariants can be equivalently expressed in terms of simpler expressions given by polynomials obtained from an expansion of the Eynard-Orantin invariants around a point on the curve.  This class of curves contains many interesting examples.

\end{abstract}

\maketitle

\section{Introduction}

As a tool for studying enumerative problems in geometry Eynard and Orantin \cite{EOrInv} associate multilinear differentials to any compact Riemann surface $C$ equipped with two meromorphic functions $x$ and $y$ with the property that the branch points of $x$ are simple and the map
\[ \begin{array}[b]{rcl} C&\to&\bc^2\\p&\mapsto& (x(p),y(p))\end{array}\]
is an immersion.  For every $(g,n)\in\bz^2$ with $g\geq 0$ and $n>0$ they define a multilinear differential, i.e. a tensor product of meromorphic 1-forms on the product $C^n$, notated by $\omega^g_n(p_1,...,p_n)$ for $p_i\in C$.  When $2g-2+n>0$, $\omega^g_n(p_1,...,p_n)$ is defined recursively in terms of local  information around branch points of $x$ of $\omega^{g'}_{n'}(p_1,...,p_n)$ for $2g'-2+n'<2g-2+n$.  A dilaton relation between $\omega^g_{n+1}$ and $\omega^g_n$ can be applied in the $n=0$ case to define $F^g$ ($=$ ``$\omega^g_0$") for $g\geq 2$, known as the symplectic invariants of the curve.  The invariant $F^g$ recursively uses all $\omega^{g'}_n$ with $g'+n\leq g+1$.

In principle $F^g$ and $\omega^g_n$ can be calculated explicitly from the recursion relations defining them, and implemented on a computer.  In practice, the expressions obtained this way are unwieldy and computable only for small $g$ and $n$.  The main aim of this paper is to express $\omega^g_n$ in a simpler form---essentially its inverse discrete Laplace transform---and to develop methods to calculate general formulae for $F^g$ in some examples. 

We consider the Eynard-Orantin invariants of the class of genus zero curves for which $x$ is two-to-one and has two branch points.  The case of one branch point $x=z^2$ is dealt with in \cite{EOrInv}.  One can parametrise the domain so that the curve can be written:
\begin{equation}  \label{eq:curve}
C=\begin{cases}x=a+b(z+1/z)\\ 
y=y(z)
\end{cases}
\end{equation}
for constants $a$, $b$ and any rational function $y(z)$ with $y'(\pm1)\neq 0$.  The definition of $\omega^g_n$ and its properties requires  the Riemann surface to be compact.  Nevertheless, by taking sequences of compact Riemann surfaces one can extend the definition to allow $y(z)$ to be any analytic function defined on a domain in $\bc$ containing $z=\pm1$, e.g. $y(z)=\ln(z)$ defined on the complement of the negative imaginary axis.

\begin{theorem}  \label{th:main}
For the plane curve $C$ defined in (\ref{eq:curve}) and $2g-2+n>0$, $\omega_n^g(z_1,...,z_n)$ has an expansion around $\{z_i=0\}$ given by
\begin{equation}  \label{eq:expand}
\omega_n^g(z_1,..,z_n)=\frac{d}{dz_1}\dots\frac{d}{dz_n}\sum_{b_i>0}N^g_n(b_1,\dots,b_n)z_1^{b_1}\dots z_n^{b_n} dz_1\dots dz_n
\end{equation}
where $N^g_n$ is a quasi-polynomial in the $b_i^2$ of homogeneous degree $3g-3+n$, dependent on the parity of the $b_i$ and is symmetric in all variables of the same parity.
\end{theorem}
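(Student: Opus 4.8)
The plan is to argue by induction on $2g-2+n>0$, with base cases $(g,n)=(0,3)$ and $(g,n)=(1,1)$ verified by direct computation from the Eynard--Orantin recursion: in both cases the residues at the two branch points $z=\pm1$ of $x$ can be evaluated explicitly, and one checks that the resulting multidifferentials have the form (\ref{eq:expand}) with $N^0_3$ a parity-dependent constant and $N^1_1(b_1)$ a parity-dependent polynomial in $b_1^2$ of degree $1$. The heart of the proof is then to convert the recursion defining $\omega^g_n$ into a recursion for the coefficients $N^g_n(b_1,\dots,b_n)$, and to show that this induced recursion sends parity-dependent quasi-polynomials in the $b_i^2$ of the appropriate degrees to one of degree $3g-3+n$, symmetric in variables of the same parity.

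To set up the induced recursion I would expand the recursion kernel around $z_0=0$. The map $x$ is the quotient of $C$ by the global involution $\sigma(z)=1/z$, with fixed points $z=\pm1$; using the Bergman kernel $B(z,z')=dz\,dz'/(z-z')^2$ one computes $\int_{\sigma(z)}^{z}B(z_0,\xi)=dz_0\sum_{k\ge 0}z_0^{k}(z^{k+1}-z^{-k-1})$, so that the Eynard--Orantin kernel is $K(z_0,z)=dz_0\sum_{k\ge0}z_0^{k}\,\frac{z^{k+1}-z^{-k-1}}{2(y(z)-y(1/z))\,dx(z)}$. Substituting this into the recursion and matching the coefficient of $z_0^{b_0-1}dz_0$ against (\ref{eq:expand}), after also removing the differentials $dz_1,\dots,dz_{n-1}$ attached to the remaining arguments, shows that $b_0\,N^g_n(b_0,b_1,\dots,b_{n-1})$ is a sum over $\alpha=\pm1$ of residues at $z=\alpha$ of $\frac{z^{b_0}-z^{-b_0}}{2(y(z)-y(1/z))\,dx(z)}$ times the usual recursion data built from $\omega^{g-1}_{n}(z,1/z,z_I)$ and the stable products $\omega^{g_1}_{|I|+1}(z,z_I)\,\omega^{g_2}_{|J|+1}(1/z,z_J)$ with $g_1+g_2=g$, $I\sqcup J=\{1,\dots,n-1\}$.

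The main obstacle is that the inductive hypothesis describes these lower invariants through an expansion around $z_i=0$, whereas in the recursion data they are evaluated near $z=\pm1$ (at $z$ and at $1/z$). To bridge this I would use the standard structural fact that, for $2g'-2+n'>0$, $\omega^{g'}_{n'}$ is a meromorphic multidifferential whose only poles in each variable are at $z_i=\pm1$, of even order and with vanishing residues, and which is holomorphic at $z_i=0$ and $z_i=\infty$; consequently the data of the Taylor coefficients $N^{g'}_{n'}$ at $z_i=0$ is equivalent, through an explicit linear ``transfer'', to the data of the principal parts at $z_i=\pm1$. Concretely, summing the generating series $\sum_{b>0}N^{g'}_{n'}(b,\dots)z^{b}$ produces a rational function whose expansions at $z=1$, at $z=-1$, and under $z\mapsto1/z$ near $z=\pm1$ are read off directly, the evaluations at $z=-1$ being responsible for the $(-1)^{b}$ parity terms. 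With the lower invariants put in this form, the residue of the displayed expression is computed: since $(y(z)-y(1/z))\,dx(z)$ is a fixed rational quadratic differential vanishing to order exactly two at each of $z=\pm1$ (simplicity of the branch points and $y'(\pm1)\neq0$) while $z^{b_0}-z^{-b_0}$ vanishes to order one there, the residue operation is a fixed finite-order combination of differentiations and finite differences in the expansion variables, which sends parity-dependent polynomials in the $b_i^2$ to parity-dependent polynomials in the $b_i^2$; a bookkeeping of degrees—each of the three types of recursion term has inner degree $3g-5+n$, while the kernel-and-residue step raises the degree by exactly $2$, one unit from the summation over the internal index and one from the explicit polynomial factor—gives the stated degree $3g-3+n$.

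Symmetry of $N^g_n$ in variables of the same parity follows from the known symmetry of $\omega^g_n$ together with the parity classification of the monomials that appear, and the quasi-polynomial rather than merely formal nature of $N^g_n$ follows because every sum produced by the transfer-and-residue operation is genuinely finite, its length controlled inductively by the pole orders of the lower $\omega^{g'}_{n'}$ at $z=\pm1$, which are bounded in terms of $g'$ and $n'$. Holomorphy of $\omega^g_n$ itself at $z_i=0$, needed for (\ref{eq:expand}) to make sense, is recovered along the way since the recursion manifestly produces poles only at $z=\pm1$. The step I expect to be most delicate, and where the real content lies, is making the transfer between the expansion at $z=0$ and the local behaviour at $z=\pm1$ effective enough to confirm that quasi-polynomiality and the exact degree are preserved at each stage; the remainder is organised bookkeeping layered on top of the Eynard--Orantin recursion.
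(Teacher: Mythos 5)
Your route is genuinely different from the paper's: you propose to induct on $2g-2+n$ and push the Eynard--Orantin recursion down to a recursion on the coefficients $N^g_n$, whereas the paper makes no use of the recursion at all in proving this theorem. It simply quotes from \cite{EOrInv} that $\omega^g_n$ is meromorphic with poles only at $z_i=\pm1$, of order $6g-4+2n=2d+2$ and with vanishing residues, and is anti-invariant under $z_i\mapsto 1/z_i$; it then writes the primitive $\int_0^{z_1}\cdots\int_0^{z_n}\omega^g_n$ as a rational function $P(z_1,\dots,z_n)/\prod_i(1-z_i^2)^{2d+1}$ with $\deg P$ bounded so as to avoid a pole at infinity, reads off parity-dependent quasi-polynomiality of degree at most $2d$ in each $b_i$ from the Taylor expansion of $(1-z^2)^{-(2d+1)}$, and obtains evenness in each $b_i$ (hence dependence on $b_i^2$) from a short lemma comparing the expansions of $\omega(z)$ and $\omega(1/z)$. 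The statement about the exact degree is explicitly deferred to Theorem~\ref{th:inter}.

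The gap in your proposal is concrete: you yourself identify the ``transfer'' between the expansion at $z=0$ and the local behaviour of the lower invariants at $z=\pm1$, together with the degree bookkeeping through the residue, as ``where the real content lies,'' and you do not carry it out. Moreover, the structural input you invoke to make that transfer possible --- poles only at $z_i=\pm1$ of bounded even order, vanishing residues, holomorphy at $0$ and $\infty$, and the $z\mapsto 1/z$ symmetry --- is precisely the input from which the theorem follows directly with no induction at all: a rational form with these properties automatically has parity-dependent quasi-polynomial Taylor coefficients at $z=0$. So either you must prove these structural facts inductively through the recursion (which is the real work, and is absent), or you quote them from \cite{EOrInv}, in which case the entire inductive scaffolding is superfluous. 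Separately, your degree count through the kernel-and-residue step can at best give an \emph{upper bound} of $3g-3+n$ on the degree in the $b_i^2$; nothing in the bookkeeping rules out cancellation of the top homogeneous part, so the assertion that the degree equals $3g-3+n$ is not established. The paper obtains this only via Theorem~\ref{th:inter}, where the top coefficients are identified with non-vanishing intersection numbers $\langle\tau_{\beta_1}\cdots\tau_{\beta_n}\rangle$ on $\overline{\modm}_{g,n}$.
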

The parity dependence means that there exists polynomials $N^g_{n,k}(b_1,...,b_n)$ for $k=1,...,n$ such that $N^g_n(b_1,...,b_n)$ decomposes
\[ N^g_n(b_1,...,b_n)=N^g_{n,k}(b_1,...,b_n),\quad k={\rm number\ of\ odd\ }b_i.\]

The polynomials representing $N^g_n$ are simpler expressions than $\omega_n^g(z_1,...,z_n)$ and can have meaning themselves.  When $x=z+1/z$ and $y=z$, $N^g_n$ arises as a solution of a Hurwitz problem \cite{NorCou, NorStr}, with typical expression $N_{4,0}^{0}=\frac{1}{4}(b_1^2+b_2^2+b_3^2+b_4^2)-1$ (while the much larger expression $\omega^0_4$ can be expressed as the sum of 32 rational functions.)  The case $x=z+1/z$ and $y=\ln{z}$ arises when studying partitions and the Plancherel measure \cite{EOrAlg}, with $N_{4,0}^{0}=\frac{1}{4}\left(b_1^2+b_2^2+b_3^2+b_4^2\right)$.

\begin{theorem}  \label{th:inter}
The coefficients of the top homogeneous degree terms in the polynomial $N^g_{n,k}(b_1,...,b_n)$, defined above, can be expressed in terms of intersection numbers of tautological line bundles over the moduli space $L_i\to{\modm}_{g,n}$.  For $\sum_i\beta_i=3g-3+n$, the coefficient $v_{\bf \beta}$ of $\prod b_i^{2\beta_i}$ is
\[v_{\bf \beta}=\frac{y'(1)^{2-2g-n}+(-1)^ky'(-1)^{2-2g-n}}{x''(1)^{2g-2+n}2^{3g-3+n}\beta!}\int_{\overline{\modm}_{g,n}}c_1(L_1)^{\beta_1}...c_1(L_n)^{\beta_n}\]

\end{theorem}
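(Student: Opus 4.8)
The plan is to read the top homogeneous degree of $N^g_n$ off the leading polar behaviour of $\omega^g_n$ at the branch points $z=\pm1$ of $x$, and to identify that behaviour with the Airy curve, whose Eynard--Orantin invariants compute $\psi$-class intersection numbers \cite{EOrInv}. Write $\Phi^g_n(z_1,\dots,z_n):=\sum_{b_i>0}N^g_n(b_1,\dots,b_n)\prod z_i^{b_i}$, so that $\omega^g_n=d_1\cdots d_n\,\Phi^g_n$ in the notation of Theorem~\ref{th:main}; since $N^g_n$ is a quasi-polynomial in the $b_i^2$ and $\omega^g_n$ is singular only at $z_i=\pm1$, $\Phi^g_n$ is rational in each $z_i$ with poles only at $z_i=\pm1$, and a top-degree monomial $\prod b_i^{2\beta_i}$ ($\sum\beta_i=3g-3+n$) of $N^g_{n,k}$ corresponds, after summing the parity-restricted series $\sum_{b\equiv\sigma\,(2)}b^{2\beta}z^b$, to a joint pole of $\Phi^g_n$ of maximal total order. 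Making the inversion explicit with $\omega^g_n=d_1\cdots d_n\Phi^g_n$ and $[z^b](z-\epsilon)^{-m}=(-1)^m\epsilon^{-m-b}\binom{m+b-1}{b}$ (leading term $(-1)^m\epsilon^{-m-b}b^{m-1}/(m-1)!$), one finds, writing $C^{\epsilon}_{\bf\beta}$ for the coefficient of $\prod_i(z_i-\epsilon)^{-2\beta_i-2}$ in $\omega^g_n$,
\[v_{\bf\beta}=\frac{1}{\prod_i(2\beta_i+1)!}\Big(C^{+1}_{\bf\beta}+(-1)^{\,n+k}\,C^{-1}_{\bf\beta}\Big),\]
where the $(-1)^{k}$ comes from $(-1)^{\sum b_i}=(-1)^k$ on the sector with exactly $k$ odd $b_i$ and the $(-1)^n$ from the $n$ powers $\epsilon^{-(2\beta_i+1)}=\epsilon$.

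The heart of the argument is to compute $C^{\epsilon}_{\bf\beta}$. The Eynard--Orantin recursion is supported on residues at the branch points, so once the recursion variable is pinned at $z=\epsilon$ the kernel and the Bergman kernel reproduce, to leading order in $z-\epsilon$, the local data of the Airy curve. Introduce the local coordinate $\zeta$ with $x-x(\epsilon)=\tfrac12\zeta^2$, so $\zeta=\sqrt{x''(\epsilon)}\,(z-\epsilon)+O((z-\epsilon)^2)$ and the sheet exchange $\bar z$ near $z=\epsilon$ becomes $\zeta\mapsto-\zeta$; then $y(z)-y(\bar z)=\big(2y'(\epsilon)/\sqrt{x''(\epsilon)}\big)\zeta+O(\zeta^3)$ while the singular part of the Bergman kernel is the universal $d\zeta_1\,d\zeta_2/(\zeta_1-\zeta_2)^2$. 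Comparing with the Airy curve $x=\tfrac12\zeta^2$, $y=\zeta$, using the homogeneity $\omega^g_n\mapsto\lambda^{2-2g-n}\omega^g_n$ under $y\mapsto\lambda y$ to match $y-y(\epsilon)$ to the Airy $y$, and an induction on $2g-2+n$, one gets that after the change of variable $\zeta_i=\sqrt{x''(\epsilon)}(z_i-\epsilon)$ the leading joint polar part of $\omega^g_n$ at $z_i=\epsilon$ equals $y'(\epsilon)^{2-2g-n}\,x''(\epsilon)^{-(2g-2+n)}$ times that of $\omega^{g,\mathrm{Airy}}_n$ (the $x''(\epsilon)$ exponent coming out of the Jacobians via $\sum_i(2\beta_i+1)=6g-6+3n$). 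The terms of $\omega^g_n$ with the variables distributed over both branch points are of strictly lower total pole order --- in Eynard's graph expansion of the recursion the highest-order poles come from the single-vertex graph, which carries pure $\psi$-classes --- so they leave $C^{\epsilon}_{\bf\beta}$ unchanged.

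For the Airy curve the Kontsevich--Witten theorem \cite{EOrInv} gives $\omega^{g,\mathrm{Airy}}_n=\sum_{\sum\beta_i=3g-3+n}\big(\int_{\overline{\modm}_{g,n}}\prod_i c_1(L_i)^{\beta_i}\big)\prod_i(2\beta_i+1)!!\;\zeta_i^{-2\beta_i-2}d\zeta_i$ with no lower-order terms, hence
\[C^{\epsilon}_{\bf\beta}=y'(\epsilon)^{2-2g-n}\,x''(\epsilon)^{-(2g-2+n)}\prod_i(2\beta_i+1)!!\int_{\overline{\modm}_{g,n}}\prod_i c_1(L_i)^{\beta_i}.\]
Substituting into the displayed formula for $v_{\bf\beta}$, the identity $x''(-1)=-x''(1)$ gives $x''(-1)^{-(2g-2+n)}=(-1)^n x''(1)^{-(2g-2+n)}$, so the prefactor $(-1)^{n+k}$ of $C^{-1}_{\bf\beta}$ becomes $(-1)^k$ and both terms acquire the common $x''(1)^{-(2g-2+n)}$; and $\prod_i(2\beta_i+1)!!/(2\beta_i+1)!=\prod_i 1/(2^{\beta_i}\beta_i!)=1/(2^{3g-3+n}\beta!)$ using $\sum\beta_i=3g-3+n$. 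This gives exactly
\[v_{\bf\beta}=\frac{y'(1)^{2-2g-n}+(-1)^k y'(-1)^{2-2g-n}}{x''(1)^{2g-2+n}\,2^{3g-3+n}\,\beta!}\int_{\overline{\modm}_{g,n}}\prod_i c_1(L_i)^{\beta_i}.\]
The main obstacle is the middle paragraph: proving rigorously that the leading joint polar part at each branch point is exactly the rescaled Airy model with precisely the exponents $2-2g-n$ on $y'(\epsilon)$ and $-(2g-2+n)$ on $x''(\epsilon)$, and that the mixed-branch-point contributions are subleading --- the local-to-global content of the topological recursion --- together with the delicate sign and constant bookkeeping that must conspire to collapse the answer to $2^{3g-3+n}\beta!$ and the single sign $(-1)^k$.
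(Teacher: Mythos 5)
Your proof is correct and reaches the stated formula with all constants and signs intact, but it organizes the argument differently from the paper. Both proofs rest on the same two external inputs: the local identification of $\omega^g_n$ near the branch points with the (rescaled) Airy invariants, quoted from \cite{EOrAlg}, including the fact that mixed-branch-point asymptotics are of strictly lower order; and the Kontsevich--Witten formula for $\omega^g_n[Airy]$ from \cite{EOrInv}. Where you diverge is in the transfer step from Laurent data at $z_i=\pm1$ to the coefficients of $N^g_{n,k}$: the paper writes $\omega^g_n$ over the common denominator $\prod(1-z_i^2)^{2d+2}$ with numerator coefficients $c_{\bf k}$, extracts from the Airy asymptotics a family of linear identities on weighted sums of the $c_{\bf k}$, separately expresses the $v^I_\beta$ through the $c_{\bf k}$, and inverts the resulting $2^n\times 2^n$ system $M=J^{\otimes n}$; you instead use the partial-fraction (principal-part) decomposition and the elementary fact that $[z^{b}](z-\epsilon)^{-m}$ has leading term $(-1)^m\epsilon^{-m-b}b^{m-1}/(m-1)!$, so that the top-degree coefficients are read off directly from the two leading principal parts, with the parity dependence $(-1)^k$ entering through $\epsilon^{-b}$. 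Your route is shorter and makes the role of the two branch points transparent (it diagonalizes the paper's matrix $M$ by hand); the paper's route has the side benefit of simultaneously producing the vanishing statement $v_\beta=0$ for $|\beta|>d$ in a uniform way, which is what completes the homogeneity claim in Theorem~\ref{th:main}. Two small remarks on your write-up: the exponent $-(2g-2+n)$ on $x''(\epsilon)$ does not come from the Jacobians alone (those give $x''(\epsilon)^{-(3g-3+3n/2)}$) but from combining them with the factor $\bigl(y'(\epsilon)/\sqrt{x''(\epsilon)}\bigr)^{2-2g-n}$ from matching $y$ to the Airy model --- you do list both ingredients, so this is only a matter of the parenthetical attribution; and the ``main obstacle'' you flag (rigorously establishing the local Airy model and the subleading nature of mixed contributions) is precisely the step the paper also does not prove but cites from \cite{EOrAlg}, so your proof meets the same standard of rigor as the paper's.
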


The invariants $\omega^g_n$ satisfy string and dilaton equations---see Section~\ref{sec:string}---which enable one to express $N^g_{n+1}$ recursively in terms of $N^g_n$.   They are most easily expressed when $y(z)$ is a monomial.  The following two theorems apply to the polynomials $N^g_{n,k}$ although we abuse notation and simply write $N^g_n$.

\begin{theorem}  \label{th:string0}
The polynomials associated to the plane curves 
\[x=z+1/z,\quad y=z^{m}/m,\quad m=1,2,...\]
satisfy the following recursion relations.
\begin{equation}  \label{eq:strmon}
N^g_{n+1}(m,b_1,\dots b_n) =\sum_{j=1}^n\sum_{k=1}^{b_j}kN^g_n(b_1,\dots,b_n)|_{b_j=k}
\end{equation}
\begin{align} \label{eq:strmon1} (m+1)N^g_{n+1}(m&+1,b_1,\dots,b_n)+(m-1)N^g_{n+1}(m-1,b_1,\dots,b_n)\\
&=2m\sum_{j=1}^n\sum_{k=1}^{b_j}kN^g_n(b_1,\dots,b_n)|_{b_j=k}-m\sum_{j=1}^nb_jN^g_n(b_1,\dots, b_n)\nonumber\end{align}
\begin{align}  \label{eq:strmon2} (m-2&)N^g_{n+1}(m-2, b_1,\dots,b_n)-2mN^g_{n+1}(m, b_1,\dots,b_n)\\
&+(m+2)N^g_{n+1}(m+2, b_1,\dots,b_n)
=m\sum_{j=1}^n\sum_{k=1\pm b_j}kN^g_n(b_1,\dots,b_n)|_{b_j=k}\nonumber
\end{align}
\begin{equation}  \label{eq:dilmon}
N^g_{n+1}(m+1,b_1,...,b_n)-N^g_{n+1}(m-1,b_1,...,b_n)=m(2g-2+n)N^g_n(b_1,...,b_n)
\end{equation} 
\end{theorem}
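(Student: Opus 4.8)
The plan is to derive the four identities by substituting the monomial $y(z)=z^m/m$ and $x=z+1/z$ into the string and dilaton equations of Section~\ref{sec:string}, and then translating those equations — which are residue identities at the branch points of $x$ — into statements about the polynomials via the expansion (\ref{eq:expand}). Fix $g,n$ with $2g-2+n>0$. The dictionary is supplied by (\ref{eq:expand}): viewing $\omega^g_{n+1}(z_1,\dots,z_n,z)$ as a meromorphic $1$-form in its last variable, its Taylor coefficient about $z=0$ satisfies
\[
\text{(coefficient of }z^{\mu-1}dz\text{ in }\omega^g_{n+1})=\mu\,\frac{d}{dz_1}\cdots\frac{d}{dz_n}\sum_{b_i>0}N^g_{n+1}(\mu,b_1,\dots,b_n)\,z_1^{b_1}\cdots z_n^{b_n},
\]
so the first argument $\mu$ of $N^g_{n+1}$ is read off by $\tfrac1{2\pi i}\oint_{|z|=\epsilon}z^{-\mu}(\cdot)$. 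Since $\omega^g_{n+1}$ has poles (in its last variable) only at the branch points $z=\pm1$ — a standard property of the Eynard--Orantin invariants when $2g-2+n>0$ — it is regular at $z=0$ and $z=\infty$, with $\omega^g_{n+1}/dz=O(z^{-2})$ at infinity. Hence the small contour may be swept out to $z=\pm1$: for a test function $h(z)$ that is a Laurent polynomial, $\tfrac1{2\pi i}\oint_{|z|=\epsilon}h\,\omega^g_{n+1}$ equals $-\mathrm{Res}_{z=1}h\,\omega^g_{n+1}-\mathrm{Res}_{z=-1}h\,\omega^g_{n+1}$ up to a residue at infinity that involves only finitely many of the Taylor coefficients (and none once $h$ decays). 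This is the bridge between (\ref{eq:expand}) and the residue identities of Section~\ref{sec:string}.

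The dilaton equation (\ref{eq:dilmon}) is obtained first. For $y=z^m/m$ one has $\Phi:=\int y\,dx=\tfrac1m\bigl(\tfrac{z^{m+1}}{m+1}-\tfrac{z^{m-1}}{m-1}\bigr)$, which vanishes at $z=0$ (for $m\geq2$) and grows like $z^{m+1}$ at infinity. In the dilaton equation $\mathrm{Res}_{z=1}\Phi\,\omega^g_{n+1}+\mathrm{Res}_{z=-1}\Phi\,\omega^g_{n+1}=(2g-2+n)\,\omega^g_n$, expanding the contour through $z=\pm1$ to infinity therefore turns the left-hand side into $-\mathrm{Res}_{z=\infty}\Phi\,\omega^g_{n+1}$, and the growth of $\Phi$ against the $O(z^{-2})$ decay of $\omega^g_{n+1}/dz$ isolates exactly the Taylor coefficients of $z^{m}dz$ and $z^{m-2}dz$, i.e.\ $\tfrac1m\bigl(N^g_{n+1}(m+1,\cdot)-N^g_{n+1}(m-1,\cdot)\bigr)$. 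Matching the coefficient of $z_1^{b_1}\cdots z_n^{b_n}$ on the two sides gives (\ref{eq:dilmon}).

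The three string equations are handled the same way but with the test functions $1$, $x$ and $(z-1/z)^2$ inserted before taking residues. Since $x=z+z^{-1}$ and $(z-z^{-1})^2=z^2-2+z^{-2}$, these produce respectively the index patterns $m$; $m\pm1$; and $m,\,m\pm2$, with exactly the coefficients displayed on the left-hand sides of (\ref{eq:strmon}), (\ref{eq:strmon1}) and (\ref{eq:strmon2}). Unlike $\Phi$, these test functions stay bounded at infinity, so here the entire right-hand side comes from $\mathrm{Res}_{z=\pm1}$; evaluating that residue via the Eynard--Orantin recursion kernel for $x=z+1/z$ and re-expanding about $z=0$ produces the discrete antiderivative $b_j\mapsto\sum_{k=1}^{b_j}k\,(\cdot)|_{b_j=k}$ (on generating functions, the operator sending $G(z_j)$ to $\tfrac{z_j}{1-z_j}\tfrac{dG}{dz_j}$, whose pole at $z_j=1$ reflects the branch point), together with the linear term $-m\sum_j b_j N^g_n$ in (\ref{eq:strmon1}) and the $k=1\pm b_j$ variant in (\ref{eq:strmon2}). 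I expect this branch-point residue computation to be the main obstacle: it requires the explicit local form of the kernel near $z=\pm1$, the identification within it of the antiderivative operator, and a careful accounting of the involution $z\mapsto1/z$ (which folds the pole at $-1$ onto the pole at $1$ and is responsible for the signs) and of the parities of the $b_i$ (which is why the statement is really one about the polynomials $N^g_{n,k}$, the substitution $b_j=k$ switching parity branch as $k$ runs over its range).

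Finally, since by Theorem~\ref{th:main} each $N^g_n$ is a quasi-polynomial in the $b_i$ of the type described, every generating-function identity obtained above is equivalent to the corresponding polynomial identity among the $N^g_{n,k}$ as stated. One carries out the argument for $m\geq2$, where $\Phi$ and the string test functions are regular at $z=0$; the case $m=1$ then follows because, for fixed parity of $m$, both sides of each identity are polynomial in $m$, and $m\geq2$ already exhausts both parity classes.
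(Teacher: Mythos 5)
Your overall strategy -- specialise the string and dilaton equations of Section~\ref{sec:string} to $y=z^m/m$, push the residues at $z=\pm1$ out to $z=0,\infty$, and read off coefficients via (\ref{eq:expand}) -- is the same as the paper's, and your treatment of the left-hand sides (the test functions $1$, $x$, $(z-1/z)^2=x^2-4$ producing the index patterns $m$; $m\pm1$; $m,m\pm2$, and the dilaton antiderivative $\Phi$ producing $\frac{1}{m}(N^g_{n+1}(m+1,\cdot)-N^g_{n+1}(m-1,\cdot))$) is correct. But there is a genuine gap on the right-hand sides of (\ref{eq:strmon})--(\ref{eq:strmon2}): you defer the derivation of the discrete sums $\sum_{k}kN^g_n|_{b_j=k}$ to a ``branch-point residue computation via the Eynard--Orantin recursion kernel'' which you explicitly do not carry out, and which is in any case not where these sums come from. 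The string equation (\ref{eq:string}) already gives that side in closed form as $-\sum_j dz_j\,\partial_{z_j}\bigl(x^m(z_j)\,\omega^g_n(z_S)/dx(z_j)\bigr)$; no kernel or residue at $\pm1$ is involved. The discrete antiderivative arises purely from the Taylor expansion $dz_j/dx(z_j)=-(z_j^2+z_j^4+\cdots)$ about $z_j=0$, i.e.\ from the operator $G\mapsto\partial_{z_j}\bigl[\tfrac{z_j^2}{1-z_j^2}\,x^m(z_j)G\bigr]$. The operator you guess, $G\mapsto\frac{z_j}{1-z_j}\frac{dG}{dz_j}$, is not this one: applied to $\sum_k kN^g_n(k)z_j^{k-1}$ it yields $\sum_{k\le b_j}k(k-1)N^g_n(k)/b_j$ rather than $\sum_{k\not\equiv b_j(2)}kN^g_n(k)$, and in particular cannot produce the parity restriction on $k$ that the geometric series in $z_j^2$ is responsible for. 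Since this computation is the entire content of the right-hand sides of three of the four identities, the proof is incomplete at its central step.

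Two smaller points. First, your disposal of $m=1$ by interpolation in $m$ assumes that, for fixed parity, $N^g_{n+1}(\,\cdot\,,b_S)$ for the curve $y=z^m/m$ depends polynomially on $m$; this is an empirical observation from the tables, not something available a priori, so the argument is circular. The paper instead handles small $m$ uniformly: the terms $N^g_{n+1}(0,b_S)$ and $N^g_{n+1}(-1,b_S)$ that appear are given meaning by the polynomial extension of $N^g_{n+1}$ (the remark after Theorem~\ref{th:main}, and the oddness of $mN^g_{n+1}(m,b_S)$ in $m$, which gives $N^g_{n+1}(-1,b_S)=N^g_{n+1}(1,b_S)$), and the extra contributions are tracked explicitly. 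Second, for $m=1,2$ the integrands $x^jy\,\omega^g_{n+1}$ and $d\Phi\int_0^z\omega^g_{n+1}$ acquire poles at $z=0$ (e.g.\ the $z^{-1}$ term in $(z-1/z)^2z^m/m$ when $m=1$, and the $y_1\,dz/z$ term in $d\Phi$), so the sum of residues picks up contributions at $z=0$ as well as at $\infty$; your contour argument ignores these, whereas they are exactly what produce the $N^g_{n+1}(0,b_S)$ and the extra $N^g_{n+1}(1,b_S)$ terms needed for the identities to close up. None of these issues is fatal to the strategy, but as written the proposal does not establish the theorem.
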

\begin{cor}  
The symplectic invariants of the curve $x=z+1/z$, $y=z^{m}/m$ satisfy
\[F^g=\frac{1}{m(2g-2)}\left(N^g_{1}(m+1)-N^g_{1}(m-1)\right)\]
for $g\geq 2$.
\end{cor}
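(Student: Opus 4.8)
The plan is to recognise the corollary as the $n=0$ specialisation of the dilaton recursion (\ref{eq:dilmon}), with the symplectic invariant $F^g$ in the role of ``$N^g_0$''. Recall first that for $g\geq2$ the quantity $F^g$ is \emph{defined} by the dilaton relation applied with $n=0$, which reads
\[(2g-2)\,F^g=\sum_{\beta=\pm1}{\rm Res}_{z=\beta}\,\Phi(z)\,\omega^g_1(z),\]
where $\Phi$ is a primitive of $y\,dx$ and the sum runs over the two branch points $z=\pm1$ of $x$. This is precisely the $n=0$ instance of the dilaton equation
\[(2g-2+n)\,\omega^g_n(z_1,\dots,z_n)=\sum_{\beta=\pm1}{\rm Res}_{z=\beta}\,\Phi(z)\,\omega^g_{n+1}(z,z_1,\dots,z_n)\]
that is used in Section~\ref{sec:string} to pass from the invariants to the recursions (\ref{eq:strmon})--(\ref{eq:dilmon}); the choice of primitive is immaterial since the expansion (\ref{eq:expand}) shows $\omega^g_1$ is regular at $z=0$ and, together with the behaviour at $z=\infty$ forced by the deck transformation $z\mapsto1/z$ of $x$, this gives $\sum_{\beta=\pm1}{\rm Res}_{z=\beta}\,\omega^g_1(z)=0$.

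Next I would run the Section~\ref{sec:string} derivation of (\ref{eq:dilmon}) verbatim at $n=0$. Taking the primitive $\Phi(z)=\frac1m\big(\frac{z^{m+1}}{m+1}-\frac{z^{m-1}}{m-1}\big)$ for $m\geq2$ (and $\Phi(z)=\frac12z^2-\ln z$ when $m=1$, or, alternatively, using that the $N^g_n$ are polynomial in the $b_i$ so that (\ref{eq:dilmon}) extends continuously in $m$) and substituting the expansion (\ref{eq:expand}) of $\omega^g_1$, the residue sum is evaluated by the same bookkeeping as for general $n$: the residues at $z=\pm1$ are traded against the Laurent data of $\Phi\,\omega^g_1$ at $z=0$ and $z=\infty$ using vanishing of the total residue on $\mathbb{P}^1$, the $z=\infty$ part being expressed through the $N^g_1$ via the $z\mapsto1/z$ symmetry. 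The outcome is
\[\sum_{\beta=\pm1}{\rm Res}_{z=\beta}\,\Phi(z)\,\omega^g_1(z)=\frac1m\big(N^g_1(m+1)-N^g_1(m-1)\big).\]
Combining the two displays gives $(2g-2)F^g=\frac1m\big(N^g_1(m+1)-N^g_1(m-1)\big)$, which rearranges to the asserted formula.

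The step I expect to be the main obstacle is justifying the passage to $n=0$: one must check that nothing in the Section~\ref{sec:string} derivation of (\ref{eq:dilmon}) tacitly assumes $n\geq1$---in particular that the pole structure of $\omega^g_{n+1}(z,\cdot)$ in $z$, the behaviour at $z=\infty$, and the coefficient-extraction argument remain valid with one fewer marked point---and that the sign and normalisation in the dilaton relation defining $F^g$ agree with those used there. Once this is confirmed, the corollary follows purely as a specialisation of an already-proven identity.
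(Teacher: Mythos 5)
Your proposal is correct and follows essentially the same route as the paper: the paper likewise observes that $F^g$ is \emph{defined} by the $n=0$ instance of the dilaton equation $\sum_{\alpha}{\rm Res}_{z=\alpha}\Phi(z)\,\omega^g_1(z)=(2g-2)F^g$ and then simply reruns the residue computation proving the dilaton recursion with one fewer marked point, so that the left-hand side evaluates to $\frac1m\bigl(N^g_1(m+1)-N^g_1(m-1)\bigr)$. Your extra care about the choice of primitive and the validity of the $n=0$ specialisation is sound (the paper disposes of the former by noting all residues of $\omega^g_1$ at $z=\pm1$ vanish), but adds nothing beyond the paper's argument.
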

The polynomials corresponding to $y=\ln{z}$ satisfy recursions obtained by setting $m=0$ into (\ref{eq:strmon}) and $((\ref{eq:strmon1})-(\ref{eq:dilmon}))/m$.  

Theorem~\ref{th:string0} is a special case of the following more general theorem that applies to any analytic function $y(z)$ defined on a domain in $\bc$ containing $z=\pm1$ expanded as $y(z)\sim\sum (a_k+zb_k)(1-z^2)^k$.  For example $y(z)=\ln{z}\sim\sum\frac{(1-z^2)^k}{-2k}$.  

First we need the following notation.  Define the operator $\dd$ on functions by
\[\dd f(n)=f(n+1).\]
Further, define $\dd\{f(n)\}_{n=a}=f(a+1)$.  As usual, for a polynomial $p(z)=\sum p_iz^i$, define $p(\dd)\{f(n)\}_{n=a}=\sum p_if(a+i)$. Note that $\dd-I$ is a discrete derivative and $y(\dd)\sim\sum (a_k+zb_k)(1-\dd^2)^k$ is a sum over powers of the discrete derivative $\dd^2-I$.  Put $b_S=(b_1,...,b_n)$ and $|b_S|=b_1+...+b_n$. 
\begin{theorem}  \label{th:string}
For the plane curve $x=z+1/z$, $y=y(z)$
\begin{align}
\dd y(\dd)\left\{mN^g_{n+1}(m,b_S)\right\}_{m=-1}\hspace{-1mm}&=\sum_{j=1}^n\hspace{-3mm}\sum_{\tiny\begin{array}{c}k=1\\k\not\equiv b_j(2)\end{array}}^{b_j}\hspace{-3mm}kN^g_n(b_S)|_{b_j=k}\label{eq:str1}\\
(1+\dd^2)y(\dd)\left\{mN^g_{n+1}(m,b_S)\right\}_{m=-1}&\hspace{-1mm}=2\sum_{j=1}^n\hspace{-3mm}\sum_{\tiny\begin{array}{c}k=1\\k\equiv b_j(2)\end{array}}^{b_j}\hspace{-4mm}kN^g_n(b_S)|_{b_j=k}-|b_S|N^g_n(b_S)\\
(1-\dd^2)^2y(\dd)\left\{mN^g_{n+1}(m,b_S)\right\}_{m=-2}\hspace{-1mm}&=\sum_{j=1}^n\sum_{k=1\pm b_j}kN^g_n(b_S)|_{b_j=k}\\
(1-\dd^2)y(\dd)\left\{N^g_{n+1}(m,b_S)\right\}_{m=-1}\hspace{-1mm}&=(2-2g-n)N^g_n(b_S)
\label{eq:dil}
\end{align}
\end{theorem}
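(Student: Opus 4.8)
The plan is to derive all four identities from the string and dilaton equations for the Eynard-Orantin invariants $\omega^g_{n+1}$, combined with the expansion of Theorem~\ref{th:main}. First I would write down explicitly the string equation relating $\omega^g_{n+1}(z_0,z_1,\dots,z_n)$ to $\omega^g_n(z_1,\dots,z_n)$ and the dilaton equation relating $\omega^g_{n+1}$ to $\omega^g_n$; these are recalled in Section~\ref{sec:string}. The key observation is that for the curve $x=z+1/z$ the two branch points of $x$ are at $z=\pm1$, and the string/dilaton equations are stated as residue formulae or contour integrals localized at these branch points. Since $y=y(z)$ enters the recursion only through its values and derivatives near $z=\pm1$, expanding $y$ as $y(z)\sim\sum(a_k+zb_k)(1-z^2)^k$ and noting $1-z^2$ vanishes to first order at each branch point is exactly what converts the analytic data of $y$ into the discrete-derivative operator $y(\dd)$ acting in the variable $m$ (or rather in the shift that tracks the power of $(1-z^2)$, i.e. the behavior of $\omega$ near the branch point).

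The heart of the argument is a substitution of the expansion \eqref{eq:expand} into these equations. On the right-hand side, the operations ``take $\frac{d}{dz_j}$ then multiply by $z_j$ and sum'' produce, after extracting the coefficient of $z_1^{b_1}\cdots z_n^{b_n}$, exactly the sums $\sum_{k=1}^{b_j}kN^g_n(b_S)|_{b_j=k}$ — this is the inverse discrete Laplace transform turning differentiation and multiplication by $z$ into the ladder sums $\sum_k k(\cdot)$. The parity splitting into $k\equiv b_j$ and $k\not\equiv b_j$ (mod $2$), and the appearance of $k=1\pm b_j$ in the third identity, should emerge from the fact that $x$ is two-to-one with the deck transformation $z\mapsto 1/z$, so that the expansion of $\omega$ involves $z$ and $1/z$ and the residue at a branch point picks out particular parity classes; concretely the conjugate point $\bar z$ near $z=1$ satisfies $z\bar z=1$, forcing the relevant coefficient extraction to involve sums over $k$ of fixed parity relative to $b_j$. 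On the left-hand side, substituting \eqref{eq:expand} for $\omega^g_{n+1}(z_0,z_1,\dots,z_n)$ and expanding around $z_0$ near the branch point (set $z_0=1$, write $z_0=1+\epsilon$ and track powers), the factor $y(z_0)-y(\bar z_0)$ or $y(z_0)+y(\bar z_0)$ appearing in the kernel of the recursion becomes, after the Laplace transform in $z_0$, the operator $\dd y(\dd)$ or $(1+\dd^2)y(\dd)$ etc. applied to $mN^g_{n+1}(m,b_S)$ evaluated at $m=-1$ or $m=-2$; the precise shift $(1\pm z^2)$ versus $(1\pm z^2)^2$ and the evaluation point $m=-1$ versus $m=-2$ come from whether the kernel has a simple or double pole in the recursion, which is the distinction between the first two identities and the third.

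I would obtain \eqref{eq:str1}, \eqref{eq:dil} and the two displayed equations by matching the first, second and third ``types'' of contribution in the string equation (there are three because the recursion kernel, expanded near a simple branch point of $x=z+1/z$, has an expansion whose first few terms in the local coordinate give the three distinct operators), plus the dilaton equation giving the last. The main obstacle will be the bookkeeping around the branch point: one must carefully expand the Bergman kernel / recursion kernel of the curve $x=z+1/z$ in a local coordinate at $z=1$, identify which powers of the local coordinate pair with which coefficients $N^g_n(b_S)$, and verify that the resulting operator in $m$ is precisely $y(\dd)$ with the stated pre-factors $\dd$, $1+\dd^2$, $(1-\dd^2)^2$, $1-\dd^2$ and the stated evaluation points $m=-1,-2$. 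Checking consistency with the monomial case $y=z^m/m$ — where $y(\dd)$ should collapse to the explicit formulae \eqref{eq:strmon}–\eqref{eq:dilmon} of Theorem~\ref{th:string0} — provides both a guide and a verification of the normalization.
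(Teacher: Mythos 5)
Your overall strategy---start from the string and dilaton equations of Section~\ref{sec:string}, substitute the expansion (\ref{eq:expand}), and match coefficients of $\prod_i b_iz_i^{b_i-1}$---is the right one, and your reading of the right-hand sides is close to correct; note however that the parity restriction on the ladder sums comes from $x^m(z_j)/dx(z_j)$ expanding as a series in $z_j^2$ (for instance $1/dx(z_j)=-(z_j^2+z_j^4+\dots)/dz_j$, so multiplying the expansion of $\omega^g_n$ by it shifts $b_j$ by even amounts), not from the deck transformation acting on a conjugate point. The genuine gap is on the left-hand side. The string equation (\ref{eq:string}) hands you a residue of $x^my\,\omega^g_{n+1}$ at the branch points $z=\pm1$, whereas the quantities $N^g_{n+1}(m,b_S)$ are coefficients of the expansion of $\omega^g_{n+1}$ at $z=0$. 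Your plan to expand the recursion kernel in a local coordinate at $z=1$ cannot bridge these: a local computation at a branch point only sees the principal parts of $\omega^g_{n+1}$ there (that is the content of Theorem~\ref{th:inter}), not its Taylor coefficients at the origin. The missing step is global: replace $y$ by the polynomial $y^{(N)}=y_0+y_1z+\dots$ (legitimate because only finitely many derivatives of $y$ at $\pm1$ enter the residues), observe that $x^my^{(N)}\omega^g_{n+1}$ then has poles only at $z=\pm1,0,\infty$, use the vanishing of the total sum of residues to trade the residue at $\pm1$ for residues at $0$ and $\infty$, and pull the residue at $\infty$ back to $0$ via the antisymmetry $\omega^g_{n+1}(1/z,z_S)=-\omega^g_{n+1}(z,z_S)$. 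Only after this transfer does extracting the coefficient of $\prod_i b_iz_i^{b_i-1}$ yield $\sum_k ky_kN^g_{n+1}(b_S,k)=\dd\, y(\dd)\left\{mN^g_{n+1}(m,b_S)\right\}_{m=-1}$ and its analogues.

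A second, related misconception: the prefactors $\dd$, $1+\dd^2$, $(1-\dd^2)^2$, $1-\dd^2$ and the evaluation points $m=-1,-2$ do not record whether ``the kernel has a simple or double pole.'' They record the choice $m=0,1,2$ of the power of $x$ in (\ref{eq:string}) (with $x^2$ replaced by $x^2-4$ in the third identity) and, for (\ref{eq:dil}), the use of the antiderivative $\Phi$ with $d\Phi=ydx$; since $x=z+1/z$, multiplication by $x$ acts on the coefficient sequence as $\dd+\dd^{-1}$, which after clearing the offset produces exactly these operators and shifts. Likewise, the expansion $y\sim\sum(a_k+zb_k)(1-z^2)^k$ is not what generates $y(\dd)$ in the derivation; its role is only to define $y^{(N)}$ and to guarantee, via the vanishing of high discrete derivatives $(1-\dd^2)^k$ on quasi-polynomials, that the left-hand sides are finite and independent of $N$. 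With the residue-transfer step supplied and these attributions corrected, your outline becomes the paper's proof.
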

Although $y$ may not be a polynomial the left hand sides of (\ref{eq:str1}) - (\ref{eq:dil}) are finite sums, since large enough powers of a discrete derivative vanish on the quasi-polynomial $N^g_{n+1}$.  See Section~\ref{sec:EO}.
\begin{cor}   \label{th:sympl}
The symplectic invariants of the curve $x=z+1/z$, $y=y(z)$ satisfy
\[F^g=\frac{1}{2-2g}(1-\dd^2)y(\dd)\left\{N^g_1(m)\right\}_{m=-1}\]
for $g\geq 2$.
\end{cor}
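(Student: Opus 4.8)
The plan is to obtain the formula as the $n=0$ specialisation of the dilaton equation (\ref{eq:dil}). Recall that, following Eynard--Orantin \cite{EOrInv}, for $g\geq2$ the symplectic invariant $F^g$, that is ``$\omega^g_0$'', is defined precisely so that the dilaton equation extends to $n=0$: with $\Phi$ a primitive of $y\,dx$ on the curve $x=z+1/z$ and the sum taken over the two branch points $\beta=\pm1$ of $x$,
\[\sum_{\beta=\pm1}{\rm Res}_{z=\beta}\Phi(z)\,\omega^g_{n+1}(z,z_1,\dots,z_n)=(2-2g-n)\,\omega^g_n(z_1,\dots,z_n)\]
for all $2g-2+n>0$; at $n=0$, $g\geq2$ this reads $F^g=\frac{1}{2-2g}\sum_{\beta=\pm1}{\rm Res}_{z=\beta}\Phi(z)\,\omega^g_1(z)$. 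The corollary asserts that this residue sum coincides with $(1-\dd^2)y(\dd)\{N^g_1(m)\}_{m=-1}$.

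First I would check that the derivation of (\ref{eq:dil}) given in the proof of Theorem~\ref{th:string} (Section~\ref{sec:string}) invokes only the inequality $2g-2+n>0$ and nowhere uses the marked points $z_1,\dots,z_n$ in an essential way, so that the argument applies verbatim with $n=0$. Its left-hand side is produced by substituting the expansion (\ref{eq:expand}) of $\omega^g_1$ about $z=0$ into the branch-point residue sum above, using the $z\leftrightarrow1/z$ symmetry of $x$ to merge the two residues at $z=\pm1$, and recognising the outcome as the discrete operator $(1-\dd^2)y(\dd)\{\,\cdot\,\}_{m=-1}$ applied to the coefficients $N^g_1(m)$. Its right-hand side is $(2-2g)N^g_0$, and since no $b$-variables remain, $N^g_0$ is a constant, equal to $F^g$ by the identification above. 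Dividing by $2-2g\neq0$ (valid because $g\geq2$) gives the stated identity. The finiteness remark following Theorem~\ref{th:string} still applies: although $y(\dd)$ is formally an infinite series in the discrete derivative $\dd^2-I$, it truncates when applied to the quasi-polynomial $N^g_1$.

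The main obstacle is the bookkeeping at $n=0$: one must be sure that ``$\omega^g_0$'' $=F^g$ fits consistently into the recursion---that the residue identity defining $F^g$ really is the $n=0$ case of the identity used to prove (\ref{eq:dil})---and that in folding the two branch-point residues into a single operator with evaluation at $m=-1$ no spurious term is picked up from $z=0$ or $z=\infty$, where $y\,dx$ may itself be singular (for instance $y=\ln z$). Ruling out such auxiliary residues, using the regularity of $\omega^g_1$ away from the branch points together with the vanishing of the sum of all residues of $\Phi\,\omega^g_1$ on $\bp^1$, is the one place that needs care; everything else is the same computation as in Theorem~\ref{th:string} with $n$ set to $0$.
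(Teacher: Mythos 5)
Your proposal is correct and follows the paper's own route: the paper likewise takes the $n=0$ dilaton relation $\sum_{\alpha}\mathrm{Res}_{z=\alpha}\Phi(z)\,\omega^g_1(z)=(2g-2)F^g$ as the definition of $F^g$ and then applies the residue computation from the proof of equation~(\ref{eq:dil}) verbatim to its left-hand side. The extra care you flag about residues at $z=0$ and $z=\infty$ is exactly the content of that earlier computation, so nothing further is needed.
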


The definition of the Eynard-Orantin invariants is given in Section~\ref{sec:EO}.  The proofs of Theorems~\ref{th:main} and \ref{th:inter} are in Section~\ref{sec:proofs} and the proof of Theorems~\ref{th:string0} and \ref{th:string} is in Section~\ref{sec:proofs}.  Section~\ref{sec:example} contains examples.

\section{Eynard-Orantin invariants.}  \label{sec:EO}

For every $(g,n)\in\bz^2$ with $g\geq 0$ and $n>0$ the Eynard-Orantin invariant of a plane curve $C$ is  a multilinear differential $\omega^g_n(p_1,...,p_n)$, i.e. a tensor product of meromorphic 1-forms on the product $C^n$, where $p_i\in C$.  When $2g-2+n>0$, $\omega^g_n(p_1,...,p_n)$ is defined recursively in terms of local  information around the poles of $\omega^{g'}_{n'}(p_1,...,p_n)$ for $2g'+2-n'<2g-2+n$.  Equivalently, the $\omega^{g'}_{n'}(p_1,...,p_n)$ are used as kernels on the Riemann surface to integrate against.  This is a familiar idea, the main example being the Cauchy kernel which gives the derivative of a function in terms of the bilinear differential $dwdz/(w-z)^2$ as follows
\[ f'(z)dz=\res{w=z}\frac{f(w)dwdz}{(w-z)^2}=-\sum_{\alpha}\res{w=\alpha}\frac{f(w)dwdz}{(w-z)^2}\]
where the sum is over all poles $\alpha$ of $f(w)$.  

The Cauchy kernel generalises to a bilinear differential $B(w,z)$ on any Riemann surface $C$ given by the meromorphic differential $\eta_w(z)dz$ unique up to scale which has a double pole at $w\in C$ and all $A$-periods vanishing.   The scale factor can be chosen so that $\eta_w(z)dz$ varies holomorphically in $w$, and transforms as a 1-form in $w$ and hence it is naturally expressed as the unique bilinear differential on $C$ 
\[ B(w,z)=\eta_w(z)dwdz,\quad \oint_{A_i}B=0,\quad B(w,z)\sim\frac{dwdz}{(w-z)^2} {\rm\  near\ }w=z.\]  
It is symmetric in $w$ and $z$.  We will call $B(w,z)$ the {\em Bergmann Kernel}, following \cite{EOrInv}.  It is called the fundamental normalised differential of the second kind on $C$ in \cite{FayThe}.  Recall that a differential is {\em normalised} if its $A$-periods vanish and it is of the {\em second kind} if its residues vanish.  It is used to express a normalised differential of the second kind in terms of local  information around its poles. 

For $2g-2+n>0$, the poles of $\omega^g_n(p_1,...,p_n)$ occur at the branch points of $x$, and they are of order $6g-4+2n$.  Since each branch point $\alpha$ of $x$ is simple, for any point $p\in C$ close to $\alpha$ there is a unique point $\hat{p}\neq p$ close to $\alpha$ such that $x(\hat{p})=x(p)$.  The recursive definition of $\omega^g_n(p_1,...,p_n)$ uses only local information around branch points of $x$ and makes use of the well-defined map $p\mapsto\hat{p}$ there. The invariants are defined as follows.
\begin{align}
\omega^0_1&=ydx\nonumber\\
 \label{eq:berg}
\omega^0_2&=B(w,z)
\end{align}
For $2g-2+n>0$,
\begin{equation}  \label{eq:EOrec}
\omega^g_{n+1}(z_0,z_S)=\sum_{\alpha}\hspace{-2mm}\res{z=\alpha}K(z_0,z)\hspace{-.5mm}\biggr[\omega^{g-1}_{n+2}(z,\hat{z},z_S)+\hspace{-5mm}\displaystyle\sum_{\begin{array}{c}_{g_1+g_2=g}\\_{I\sqcup J=S}\end{array}}\hspace{-5mm}
\omega^{g_1}_{|I|+1}(z,z_I)\omega^{g_2}_{|J|+1}(\hat{z},z_J)\biggr]
\end{equation}
where the sum is over branch points $\alpha$ of $x$, $S=\{1,...,n\}$, $I$ and $J$ are non-empty and 
\[\displaystyle K(z_0,z)=\frac{-\int^z_{\hat{z}}B(z_0,z')}{2(y(z)-y(\hat{z}))dx(z)}\] is well-defined in the vicinity of each branch point of $x$.   Note that the quotient of a differential by the differential $dx(z)$ is a meromorphic function.  The recursion (\ref{eq:EOrec}) depends only on the meromorphic differential $ydx$ and the map $p\mapsto\hat{p}$ around branch points of $x$.  

\subsection{Rational curves with two branch points.}  \label{sec:ratcurve}
The simplest example of a plane curve with non-trivial Eynard-Orantin invariants is the rational curve $y^2 = x$ where the meromorphic function $x$ defines a two-to-one branched cover with a single branch point.  It is known as the Airy curve since the Eynard-Orantin invariants reproduce KontsevichÕs generating function \cite{KonInt} for intersection numbers on the moduli space.   In this paper we study rational curves such that $x$ defines a two-to-one branched cover with two branch points.
\begin{lemma}
If $x$ is a two-to-one rational map on $\bp^1$ with two branch points then we can parametrise the domain so that $x=a+b(z+1/z)$.
\end{lemma}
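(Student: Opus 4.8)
The plan is to classify two-to-one rational maps $x:\bp^1\to\bp^1$ with exactly two branch points by using the Riemann-Hurwitz formula to pin down the ramification data, and then use the freedom of postcomposing with a M\"obius transformation (and precomposing with one, i.e.\ reparametrising the domain) to put $x$ into the stated normal form. First I would observe that a degree-two rational map $x$ is determined by the divisor of a general fibre, so $x = P/Q$ with $P,Q$ coprime polynomials, $\max(\deg P,\deg Q)=2$; the branch points of $x$ are the critical values, i.e.\ the images of the zeros of $dx$. By Riemann-Hurwitz, $-2 = 2(-2) + \sum_p(e_p-1)$, so the total ramification is $2$; since $x$ has degree $2$ each ramification point has $e_p=2$, hence there are exactly two ramification points $p_1\ne p_2$ in the domain, each mapping with multiplicity two to a branch point. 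The hypothesis is precisely that these two critical values are distinct.

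Next I would normalise. Using a M\"obius transformation on the target we may assume the two branch points (critical values) of $x$ are at $x=2$ and $x=-2$ in the target $\bp^1$ (any two distinct values would do; I pick $\pm2$ with the final answer in mind). Using a M\"obius transformation on the domain — this is the allowed reparametrisation — we may send the two ramification points $p_1,p_2$ to $z=1$ and $z=-1$. Now consider the candidate map $x_0(z) = z + 1/z$. It is rational of degree two, $x_0'(z) = 1 - 1/z^2$ vanishes exactly at $z=\pm1$, and $x_0(1)=2$, $x_0(-1)=-2$. So $x_0$ has the same ramification points, mapping to the same branch points, as our normalised $x$.

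Finally I would argue that a degree-two rational map is determined, up to nothing, by its ramification points together with the images of those points. Concretely: the map $f = x\circ x_0^{-1}$ is a well-defined meromorphic function on $\bp^1$ away from the branch locus, and since both $x$ and $x_0$ are two-to-one with the same fibre structure over each point (the two sheets over a generic value of $x_0$ are swapped by the deck transformation $z\mapsto 1/z$, and over a generic value of $x$ by the deck transformation of $x$), $f$ descends to a well-defined bijection of the two quotient $\bp^1$'s, i.e.\ a M\"obius transformation $T$ with $x = T\circ x_0$; moreover $T$ fixes $x=2$ and $x=-2$ (the branch points match) and must also fix $x=\infty$ because $x_0$ and a general degree-two normal form have their pole structure matched after a further harmless M\"obius adjustment — a M\"obius map fixing three points is the identity, so $x = x_0$ in these coordinates, and undoing the target normalisation gives $x = a + b(z+1/z)$ for constants $a,b$. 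The main obstacle is making the ``determined by ramification data'' step fully rigorous: one must check that the deck transformation of $x$ really is conjugated by $x_0^{-1}$ to $z\mapsto 1/z$ (equivalently, that one can choose the domain M\"obius transformation to respect the involutions, not merely to move two points), which is where a small amount of care — rather than a long computation — is needed.
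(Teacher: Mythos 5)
Your overall strategy is sound and genuinely different from the paper's (which is a short direct computation), but as written it has a gap at its central step, and you have correctly located it yourself: the claim that $f=x\circ x_0^{-1}$ descends to a M\"obius transformation $T$ with $x=T\circ x_0$ is exactly the assertion that $x$ is invariant under the deck transformation $z\mapsto 1/z$ of $x_0$, i.e.\ $x(z)=x(1/z)$, and this is asserted rather than proved. Merely sending the two ramification points to $\pm1$ does not obviously force the two involutions to agree. The missing observation that closes the gap is cheap but necessary: the deck transformation of a degree-two rational map is a M\"obius involution whose fixed points are precisely the two ramification points, and a nontrivial M\"obius involution is uniquely determined by its pair of fixed points (conjugate so the fixed points are $0,\infty$; then the map is $z\mapsto\lambda z$ and $\lambda^2=1$, $\lambda\neq1$ forces $\lambda=-1$). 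Hence once the ramification points of $x$ sit at $\pm1$, its deck transformation \emph{is} $z\mapsto1/z$, so $x$ is invariant and is therefore a degree-one rational function of $z+1/z$, i.e.\ $x=T(z+1/z)$ for a M\"obius $T$. Without this, ``a degree-two map is determined by its ramification data'' is not something you can invoke.

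The second, smaller issue is the target normalisation. The lemma only permits reparametrising the domain, and $T(z+1/z)$ for a general M\"obius $T$ is not of the form $a+b(z+1/z)$: you need $T$ affine, i.e.\ $T(\infty)=\infty$, i.e.\ $x(\infty)=\infty$. Your appeal to ``a further harmless M\"obius adjustment'' is doing real work here. You must use the residual domain freedom---the M\"obius maps preserving $\{\pm1\}$, e.g.\ $z\mapsto(z+\lambda)/(\lambda z+1)$---to move a pole of $x$ to $z=\infty$ without disturbing the ramification points; this is possible exactly when the critical values of $x$ are finite, which is the implicit standing assumption in the paper (it fails for $x=z^2$, whose critical value $\infty$ cannot be moved by any domain reparametrisation). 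The paper sidesteps both points by computing directly: after arranging the ramification points at $\pm1$ and $x(\infty)=\infty$, it writes $x(z)-x(1)=(z-1)^2/(q_2z^2+q_1z+q_0)$ and reads off $q_2=0$ and $q_0=0$ from the two normalisations, which yields the form at once. Your route is salvageable and arguably more conceptual, but both steps above must be filled in before it is a proof.
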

\begin{proof}
Using a conformal map we can arrange that the two branch points of $x$ are $z=\pm1$.  The conformal map $z\mapsto (z+\lambda)/(\lambda z+1)$ which fixes $z=\pm1$ can be used to further arrange that $x(\infty)=\infty$.  Since $x(z)-x(1)$ has a double root at $z=1$ we have
\[ x(z)-x(1)=\frac{(z-1)^2}{q_2z^2+q_1z+q_0},\quad x(\infty)=\infty\Rightarrow q_2=0,\quad x'(-1)=0\Rightarrow q_0=0\]
so put $a=x(1)-2/q_1$ and $b=1/q_1$, and the result follows.
\end{proof}

Thus $x(z)=a+b(z+1/z)$ and $y(z)$ is any analytic function defined on a domain in $\bc$ containing $z=\pm1$ and satisfying $y'(\pm1)\neq 0$.  When $y$ is not polynomial, for example $y$ is rational or transcendental, we expand it as a series of polynomials in the following non-standard way.  Given such $y(z)$, define the partial sums
\[ y^{(N)}(z)=\sum_{k=0}^N(a_k+zb_k)(1-z^2)^k\]
to agree with $y(z)$ at $z=\pm1$ up to the $N$th derivatives.  One can achieve this by expressing $y(z)=y_+(z)+y_-(z)$ where $y_{\pm}(z)=1/2(y(z)\pm y(-z))$ and define $y^{(N)}_{+}(z)=\sum_{k=0}^Na_k(1-z^2)^k$ where $a_k$ are determined by the property 
\[\frac{d^k y_+}{dz^k}(\pm 1)=\frac{d^k y^{(N)}_{+}}{dz^k}(\pm 1),\quad k=0,...,N.\]
Similarly define $b_k$ from $y_-(z)/z$.

The partial sums $\{y^{(N)}(z)\}$ do not necessarily converge to $y(z)$.  For example, 
\[ y(z)=\ln{z}\sim\sum\frac{(1-z^2)^k}{-2k}\]
is a divergent asymptotic expansion for $\ln(z)$ at $z=0$ in the region $Re(z^2)>0$.  

The partial sums $y^{(N)}(z)$ are used in the recursions defining $\omega^g_n$ in place of $y(z)$ since they contain the same local information around $z=\pm 1$ up to order $N$.  More precisely, to define $\omega^g_n$ for the curve $(x(z),y(z))$ it is sufficient to use $(x(z),y^{(N)}(z))$ for any $N\geq 6g-6+2n$.

There are various ways to express a transcendental function as a limit of rational functions.  The main benefit of the approach used here is the fact that the expressions which appear in the string and dilaton equations $x(z)^my(z)\omega_n^g$ have poles only at $z=\pm 1$ and 0 and $\infty$, allowing one to translate properties of $\omega_n^g$ near $z=\pm 1$ to properties of $\omega_n^g$ near $z=\infty$, which is encoded by $N^{g}_n$.

Theorem~\ref{th:string} involves the expression $y(\dd)$ where $\dd$ is defined in the introduction, and in particular $I-\dd$ is a discrete derivative.  It is an easy fact (proved by induction) that for any degree $d$ polynomial $p(n)$, high enough discrete derivatives vanish: $(1-\dd)^kp(n)\equiv 0$ for $k>d$.  Similarly, $1-\dd^2$ is a discrete derivative, and for a parity dependent quasi-polynomial $p(n)$ (so $p(n)=p_+(n)$ for $n$ even and $p(n)=p_-(n)$ for $n$ odd, where $p_{\pm}(n)$ are polynomials) 
\[ (1-\dd^2)^kp(n)\equiv 0,\quad k{\rm\ sufficiently\ large}\] 
in fact $k>$ maximum degree of $p_{\pm}(n)$.  To make sense of (\ref{eq:str1}) - (\ref{eq:dil}), one must replace $y(\dd)$ with $y^{(N)}(D)$ for large enough $N$ so that the left hand sides of (\ref{eq:str1}) - (\ref{eq:dil}) have only finitely many terms.  This procedure is well-defined, since the vanishing of discrete derivatives ensures that the left hand sides of (\ref{eq:str1}) - (\ref{eq:dil}) are independent of the choice of $N$ when it is large enough.

\section{Proofs}   \label{sec:proofs}

\begin{proof}[Proof of Theorem~\ref{th:main}]

Theorem~\ref{th:main} reflects three main properties of the multilinear differential $\omega_n^g(z_1,...,z_n)$ proven in \cite{EOrInv}---it is meromorphic, with poles at $z_i=\pm1$ of order $6g-4+2n=:2d+2$ and residue 0, and possesses symmetry under $z_i\mapsto 1/z_i$.  

Since all residues of $\omega_n^g$ vanish, the integral 
\[\f^g_n(z_1,...,z_n)=\int_0^{z_1}...\int_0^{z_n}\omega_n^g(z'_1,...,z'_n)\]
is a well-defined meromorphic function that vanishes when any $z_i=0$ and has poles of order $2d+1$ at $z_i=\pm 1$.  Write this rational function as 
$$\f^g_n(z_1,...,z_n)=\frac{\sum_{0<k_i<4d+2}p_{\textbf{k}}z_1^{k_1}\dots z_n^{k_n}}{\prod_{i=1}^n (1-z_i^2)^{2d+1}}$$ 
where the $p_{\textbf{k}}=p_{k_1,\dots,k_n}\in \mathbb{C}$ and the degree of the numerator is small enough to avoid a pole at infinity. 

The Taylor expansion
\[\frac{1}{(1-z^2)^{2d+1}}= \frac{1}{(2d)!}\frac{d^{2d}}{d(z^2)^{2d}}\sum_{m=0}^{\infty}z^{2m}=\sum_{m=0}^\infty\binom{m+2d}{2d}z^{2m}\]
has quasi-polynomial coefficients, meaning that the coefficients of $z^b$ are described by two polynomials in $b$---when $b$ is odd the coefficient of $z^b$ is the zero polynomial and when $b$ is even the coefficient of $z^b$ is a degree $2d$ polynomial in $b$.  More generally, the Taylor expansion of $\f^g_n(z_1,...,z_n)$ about $z_i=0$ has quasi-polynomial coefficients, depending on parity.  When $n=1$,
\[\frac{\sum p_kz^k}{(1-z^2)^{2d+1}}=  \sum_{k,m}p_k\binom{m+2d}{2d}z^{2m+k}=\sum_{b>0} N^g_1(b)z^b.\]
The coefficient of $z^b$ consists of all terms where $2m+k=b$, hence the odd part of $p(z)=\sum p_kz^k$ gives rise to a degree $2d$ polynomial representing $N^g_1(b)$ when $b$ is odd, and the even part of $p(z)$ gives rise to a degree $2d$ polynomial representing $N^g_1(b)$ when $b$ is even.  Similarly,
\begin{align*}
\f_n^g(z_1,...,z_n)&=\sum_{k_1,...,k_n=0}^{4d+2}p_{\textbf{k}}z_1^{k_1}\dots z_n^{k_n}\prod_{i=1}^n \sum_{m_i=0}^\infty\binom{m_i+2d}{2d}z_i^{2m_i}\\
&=\sum_{k_1,...,k_n=0}^{4d+2}\sum_{m_i\geq 0} p_{{\textbf{k}}}\prod_{i=1}^n\binom{m_i+2d}{2d}z_i^{2m_i+k_i}\\
&=:\ \ \sum_{b_i>0}^\infty N^g_n(b_1,\dots,b_n)z_1^{b_1}\dots z_n^{b_n}
\end{align*}
expresses $N^g_n(b_1,\dots,b_n)$ as the sum over the terms with $2m_i+k_i=b_i$ which is a quasi-polynomial depending on the parity of the $b_i$.   By symmetry of the $z_i$, $N^g_n$ does not depend on which $b_i$ are odd but only how many.  Hence we write   
\[ N^g_n(b_1,...,b_n)=N^g_{n,k}(b_1,...,b_n),\quad k={\rm number\ of\ odd\ }b_i.\]
Each binomial coefficient and hence each polynomial $N^g_{n,k}(b_1,\dots,b_n)$ is a polynomial of degree $2d$ in each $b_i$.  The stronger fact that they have {\em homogeneous} degree $2d$ in the $b_i$ is a consequence of Theorem~\ref{th:inter}.

It remains to show that $N^g_n$ is a quasi-polynomial in the $b_i^2$.  Equivalently, we will show that $b_1...b_nN^g_n(b_1,\dots,b_n)$ is odd in each $b_i$ using symmetries of
\[\omega^g_n=\sum_{b_i>0}^\infty b_1...b_nN^g_n(b_1,\dots,b_n)z_1^{b_1-1}\dots z_n^{b_n-1}dz_1...dz_n.\]
\begin{lemma}
A meromorphic 1-form on $\bp^1$ with poles at $z=\pm 1$ has the following related expansions around $z=0$
\begin{equation}  \label{eq:recneg}
\omega(z)=\sum_{n=1}^{\infty}p(n)z^{n-1}dz\quad\Leftrightarrow\quad\omega\left(\frac{1}{z}\right)=\sum_{n=1}^{\infty}p(-n)z^{n-1}dz
\end{equation}
where $p(n)$ is a quasi-polynomial depending on the parity of $n$.
\end{lemma}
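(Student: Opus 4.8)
The plan is to reduce the statement to an explicit manipulation of geometric series together with a single binomial reflection identity. Since $\omega$ is meromorphic on $\bp^1$ with its poles confined to $z=\pm1$, it is regular at $z=0$ and at $z=\infty$; writing $\omega=R(z)\,dz$, regularity at infinity forces $R(z)=O(z^{-2})$, so by partial fractions
\[
R(z)=\sum_{j\geq1}\frac{\alpha_j}{(1-z)^j}+\sum_{j\geq1}\frac{\beta_j}{(1+z)^j}
\]
is a finite sum. Expanding $(1-z)^{-j}=\sum_m\binom{m+j-1}{j-1}z^m$ and $(1+z)^{-j}=\sum_m(-1)^m\binom{m+j-1}{j-1}z^m$ and reading off the coefficient of $z^{n-1}$ gives
\[
p(n)=\sum_j\alpha_j\binom{n+j-2}{j-1}+(-1)^{n-1}\sum_j\beta_j\binom{n+j-2}{j-1},
\]
which exhibits $p$ as a parity-dependent quasi-polynomial; this closed form is the unique quasi-polynomial matching the Taylor coefficients for $n\geq1$, so the symbol $p(-n)$ is unambiguous and it is enough to prove the implication $\Rightarrow$.

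Next I would expand $\omega(1/z)=R(1/z)\,d(1/z)=-z^{-2}R(1/z)\,dz$ about $z=0$. Using $(1-1/z)^{-j}=(-1)^jz^j(1-z)^{-j}$ and $(1+1/z)^{-j}=z^j(1+z)^{-j}$,
\[
\omega(1/z)=-\sum_j\alpha_j(-1)^j\,\frac{z^{j-2}}{(1-z)^j}\,dz-\sum_j\beta_j\,\frac{z^{j-2}}{(1+z)^j}\,dz.
\]
The coefficient of $z^{n-1}$ in $z^{j-2}(1-z)^{-j}$ is $\binom{n}{j-1}$, and in $z^{j-2}(1+z)^{-j}$ is $(-1)^{n+1-j}\binom{n}{j-1}$; the point is that $\binom{n}{j-1}$ already vanishes for $1\leq n\leq j-2$, so the prefactor $z^{j-2}$ introduces no spurious low-order terms and these formulas hold for every $n\geq1$. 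Collecting terms gives $\omega(1/z)=\sum_{n\geq1}q(n)z^{n-1}dz$ with $q(n)$ a combination of the $\binom{n}{j-1}$ built from the same $\alpha_j,\beta_j$.

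The remaining step is $q(n)=p(-n)$, and this rests solely on the identity $\binom{-x}{k}=(-1)^k\binom{x+k-1}{k}$ (an identity of polynomials in $x$): with $k=j-1$ and $x=n+2-j$ it gives $\binom{n+j-2}{j-1}\big|_{n\mapsto-n}=(-1)^{j-1}\binom{n}{j-1}$, and substituting this into the formula for $p(-n)$ and keeping track of the signs — the $(-1)^j$ from $(1-1/z)^{-j}$, the $(-1)^{j-1}$ from the reflection, the Jacobian $-z^{-2}$, and the parity factor in the $\beta$-sum — reproduces the formula for $q(n)$ term by term. The implication $\Leftarrow$ then follows by applying $\Rightarrow$ to the $1$-form $\omega(1/z)$, which has the same pole set $\{\pm1\}$ and quasi-polynomial coefficients $p(-n)$, together with the fact that $z\mapsto1/z$ is an involution. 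The main obstacle is purely this sign-and-parity bookkeeping: no individual step is subtle, but the argument succeeds only if the Jacobian shift $z^{j-2}$ and the reflection identity are aligned so that everything lands exactly on $p(-n)$ with the correct sign.
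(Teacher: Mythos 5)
Your proof is correct and follows essentially the same strategy as the paper's: decompose $\omega$ linearly into elementary rational $1$-forms, read off the Taylor coefficients as binomial coefficients, and verify the claim via the binomial reflection identity $\binom{-x}{k}=(-1)^k\binom{x+k-1}{k}$. The only difference is cosmetic --- you use the partial-fraction basis $(1\pm z)^{-j}$ where the paper uses monomial numerators over the common denominator $(1-z^2)^{m+1}$, so its reflection identity $p_{2m-k}(b)=(-1)^mp_k(-b)$ is the same bookkeeping in a different basis.
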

\begin{proof}
We can express $\omega$ as a rational function with numerator a polynomial of degree small enough so that there are no poles at infinity.  In particular, by linearity it is enough to prove the lemma when the numerator is a monomial so
\[\omega(z)=\frac{z^kdz}{(1-z^2)^{m+1}}\quad{\rm and}\quad \omega\left(\frac{1}{z}\right)=(-1)^m\frac{z^{2m-k}dz}{(1-z^2)^{m+1}}.\]
From the expansion
\[\frac{1}{(1-z^2)^{m+1}}=\sum\binom{n+m}{m}z^{2n}\]
one gets
\[\frac{z^kdz}{(1-z^2)^{m+1}}=\sum\binom{n+m}{m}z^{k+2n}dz=\sum p_k(b)z^{b-1}dz\]
where 
\[
p_k(b)=\left\{\begin{array}{cl} 0,&b\equiv k ({\rm mod\ }2)\\\binom{(b-k-1)/2+m}{m}, &b\not\equiv k ({\rm mod\ }2)\end{array}\right..\]
Also, 
\[p_{2m-k}(b)=\binom{(b-2m+k-1)/2+m}{m}=\binom{(b+k-1)/2}{m}=(-1)^mp_k(-b)\]
(for $b\not\equiv k ({\rm mod\ }2)$ and $p_{2m-k}(b)=0$ for $b\equiv k ({\rm mod\ }2)$ so the above equation holds for all $b$.)  Hence (\ref{eq:recneg}) holds when $\omega$ has a monomial numerator and hence for all rational $\omega$ and the lemma is proven.
\end{proof}
An immediate corollary of the lemma is that if $\omega(z)=\omega(1/z)$ then the quasi-polynomial $p(n)$ is even in $n$ and if $\omega(z)=-\omega(1/z)$ then $p(n)$ is odd in $n$.  A consequence of a more general result in \cite{EOrInv} is the symmetry
\[\omega_n^g(z_1,\dots,z_n)=-\omega^g_n(1/z_1,\dots,z_n)\] 
and similarly for each variable $z_i$.  Hence $b_1...b_nN^g_n(b_1,\dots,b_n)$ is an odd quasi-polynomial in each $b_i$ as required.
\end{proof}

\noindent {\em Remark.}  The expansion (\ref{eq:expand}) of $\omega^g_n(z_1,...,z_n)$ around $(z_1,...,z_n)=(0,...,0)$ defines $N^g_n(b_1,...,b_n)$ only for $(b_1,...,b_n)\in\bz_+^n$.  One can make sense of $b_i=0$ using the polynomial representation $N^g_{n,k}(b_1,...,b_n)$ of $N^g_n(b_1,...,b_n)$ for $k=$ number of odd $b_i$.  In terms of $\omega^g_n$ one has the following
\[N^g_1(0)=\int^0_{\infty}\omega^g_1(z)\]
and more generally, $b_1...b_kN^g_n(b_1,...,b_k,0,...,0)$ is the coefficient of $z_1^{b_1-1}...,z_k^{b_k-1}$ in the expansion around $(z_1,...,z_k)=(0,...,0)$ of $\int_{z_{k+1}=\infty}^0...\int_{z_n=\infty}^0\omega^g_n(z_1,...,z_n)$.

\begin{proof}[Proof of Theorem~\ref{th:inter}]

The proof uses the behaviour of $\omega^g_n$ near the branch points $z_i=\pm 1$.  Express $\omega^g_n$ as a rational function
$$\omega^g_n=\frac{\sum^{4d+2}_{k_1,\dots,k_n=0}c_{\textbf{k}}z_1^{k_1}\dots z_n^{k_n}}{\prod_{i=1}^n (1-z_i^2)^{2d+2}}dz_1...dz_n$$
for $c_{\textbf{k}}\in \mathbb{C}$, and $d=3g-3+n$.  Consider the change of variables $z_i=\epsilon_i+sx_i$ where $\epsilon_i\in \{\pm 1\}$, $s\in\mathbb{R}$ is small and $x_i$ is a local coordinate on the spectral curve.  The asymptotic behaviour of $\omega^g_n$ near $z_i=\pm 1$ corresponds to $s\rightarrow 0$ for all combinations of the $\epsilon_i$.  This change gives:
$$\omega^g_n=\frac{\sum^{4d+2}_{k_1,\dots,k_n=0}c_{\textbf{k}}(\epsilon_1+sx_1)^{k_1}\dots (\epsilon_n+sx_n)^{k_n}}{s^{(2d+2)n}\prod_{i=1}^nx_i^{2d+2} (2\epsilon_i+sx_i)^{2d+2}}{s^n\Pi^n_{i=1}}dx_i,$$
and we must find a minimal $q=q(\epsilon_i)\in \{0,1,\dots 4d+2\}$ so that the coefficient of $s^q$ in the numerator is the first non vanishing.  For example, if $q>0$ this tells us 
$$\sum^{4d+2}_{k_1,\dots,k_n=0}c_{\textbf{k}}\prod_{i=1}^n \epsilon_i^{k_i}=0,$$  
and if $q>1$, then the coefficient of $s^1=0$.  That is: 
$$\sum^{4d+2}_{k_1,\dots,k_n=0}c_{\textbf{k}}\prod_{i=1}^n \epsilon_i^{k_i}\left(\binom{k_1}{1}\frac{x_1}{\epsilon_1}+\dots +\binom{k_n}{1}\frac{x_n}{\epsilon_n}\right)=0$$ 
and by equating coefficients of $x_j$, for  $1\leq j \leq n$: 
$$\sum^{4d+2}_{k_1,\dots,k_n=0}c_{\textbf{k}}\prod_{i=1}^n\epsilon_i^{k_i}\frac{k_j}{\epsilon_j}=0.$$
For a general $q$ and $\alpha=(\alpha_1,\dots,\alpha_n)$
\begin{equation*}
\sum^{4d+2}_{k_1,\dots,k_n=0}c_{\textbf{k}}\binom{k_1}{\alpha_1}\dots \binom{k_n}{\alpha_n}\epsilon_1^{k_1-\alpha_1}\dots \epsilon_n^{k_n-\alpha_n}x_1^{\alpha_1}\dots x_n^{\alpha_n}=0 \indent\text{if $|\alpha|<q$.}
\end{equation*}
Thus inductively one gets
\begin{equation} \label{eq:vanishsum}
\sum^{4d+2}_{k_1,\dots,k_n=0}c_{\textbf{k}}\epsilon_1^{k_1-\alpha_1}\dots \epsilon_n^{k_n-\alpha_n}k_1^{\alpha_1}\dots k_n^{\alpha_n}=0 \indent\text{if $|\alpha|<q$}.
\end{equation}
This means that the dominant asymptotic term as $s\rightarrow 0$ will look like:
\begin{equation*}
\omega^g_n\sim \frac{1}{s^{(2d+1)n-q}\prod_{i=1}^nx_i^{2d+2}}\sum^{4d+2}_{k_1,\dots,k_n=0}\sum_{|\alpha|=q}\frac{c_{\textbf{k}}}{{2^{(2d+2)n}\alpha!}}{\Pi^n_{i=1}}{\epsilon_i^{k_i-\alpha_i}k_i^{\alpha_i}x_i^{\alpha_i} }dx_i.
\end{equation*}
In \cite{EOrAlg} it is shown that as $z_1,\dots, z_n$ tends to the branch points $\epsilon_1,\dots, \epsilon_n$, $\epsilon_j=\pm 1$
\begin{equation*}
\omega^g_n\sim \begin{cases}
{s^{6-6g-3n}[\frac{1}{2}x''(\epsilon_i)y'(\epsilon_i)]^{2-2g-n}\omega^g_n[Airy], \indent \text{for all $\epsilon_i$ the same}}\\
{\quad\text{lower order asymptotics,\quad\quad\quad\quad\quad\quad\quad for mixed $\epsilon_i$}}
\end{cases}
\end{equation*} where the Airy curve is given by $y^2=x$.  Thus $q=2d(n-1)$ if all $\epsilon_i$ are the same and $q>2d(n-1)$ for all other combinations.  

From \cite{EOrInv} there is a relationship between $\omega^g_n[Airy]$ and intersection numbers of tautological line bundles over the moduli space.
\begin{equation}
\omega^g_n[Airy](z_S)=\frac{x''(0)^{2-2g-n}}{2^{3g-3+n}}\sum_{|\beta|=d}\prod_{i=1}^n \frac{(2\beta_i+1)!}{\beta_i!}\frac{dz_i}{z_i^{2\beta_i+2}}\langle \tau_{\beta_1}...\tau_{\beta_n}\rangle,
\end{equation}
where we have used Witten's \cite{WitTwo} notation $\langle \tau_{\beta_1}...\tau_{\beta_n}\rangle=\int_{\overline{\modm}_{g,n}}c_1(L_1)^{\beta_1}...c_1(L_n)^{\beta_n}$ and $x''(0)=2$.  From this we discover that if all $\epsilon_i=1$:
\begin{eqnarray*}
&\frac{1}{\prod_{i=1}^nx_i^{2d+2}}&\sum^{4d+2}_{k_1,\dots,k_n=0}\sum_{|\alpha|=2d(n-1)}\frac{c_{\textbf{k}}}{{2^{(2d+2)n}\alpha!}}{\prod_{i=1}^nk_i^{\alpha_i}x_i^{\alpha_i}}dx_i\\
&=&\sum^{4d+2}_{k_1,\dots,k_n=0}\sum_{|\alpha|=2d(n-1)}\frac{c_{\textbf{k}}}{2^{(2d+2)n}}\prod_{i=1}^n\frac{k_i^{\alpha_i}x_i^{\alpha_i-2d-2}}{\alpha_i!}dx_i\\
&=&\frac{\left\{x''(1)y'(1)\right\}^{2-2g-n}}{2^{3g-3+n}}\sum_{|\beta|=d}\prod_{i=1}^n \frac{(2\beta_i+1)!}{\beta_i!}\frac{dx_i}{x_i^{2\beta_i+2}}\langle \tau_{\beta_1}...\tau_{\beta_n}\rangle.
\end{eqnarray*}
and so each $\alpha_i$ must be even and $0\leq \alpha_i\leq 2d$.  By equating powers of $x_i$ (that is, extracting the partition where $\alpha_i=2d-2\beta_i$) one gets the relation
\begin{equation} \label{eq:airyrel}
\sum^{4d+2}_{k_1,\dots,k_n=0}\frac{c_{\textbf{k}}}{2^{(2d+2)n}}\prod_{i=1}^n\frac{k_i^{2d-2\beta_i}}{(2d-2\beta_i)!}
=\frac{\left\{x''(1)y'(1)\right\}^{2-2g-n}}{2^{3g-3+n}}\prod_{i=1}^n \frac{(2\beta_i+1)!}{\beta_i!}\langle \tau_{\beta_1}...\tau_{\beta_n}\rangle.
\end{equation}

Similarly, when all $\epsilon_i=-1$ one gets 
\begin{align} \label{eq:airyrel2}
\sum^{4d+2}_{k_1,\dots,k_n=0}\frac{c_{\textbf{k}}}{2^{(2d+2)n}}\prod_{i=1}^n&\frac{(-1)^{k_i}k_i^{2d-2\beta_i}}{(2d-2\beta_i)!}\\
&=\frac{\left\{x''(-1)y'(-1)\right\}^{2-2g-n}}{2^{3g-3+n}}\prod_{i=1}^n \frac{(2\beta_i+1)!}{\beta_i!}\langle \tau_{\beta_1}...\tau_{\beta_n}\rangle.\nonumber
\end{align}
and when there is a mix of $\epsilon_i$'s, $q$ will be greater, introducing more vanishing:
\begin{equation} \label{eq:airyrel3}
\sum^{4d+2}_{k_1,\dots,k_n=0}\frac{c_{\textbf{k}}}{2^{(2d+2)n}}\prod_{i=1}^n\frac{\epsilon_i^{k_i}k_i^{2d-2\beta_i}}{(2d-2\beta_i)!}=0 \indent \text{for $|\beta|=d$.}
\end{equation}

Equations~(\ref{eq:airyrel}), (\ref{eq:airyrel2}) and (\ref{eq:airyrel3}) translate to an analogous equation (\ref{eq:coefflin}) for coefficients of the polynomials $N^g_{n,k}$.  To show this we now study the polynomials in terms of the coefficients $c_{\textbf{k}}$.  As in the proof of Theorem ~\ref{th:main}, the Taylor expansion of $\omega^g_n$ about $z_i=0$ can be written:
\begin{align*}
\omega^g_n&=\sum_{{k_1,...,k_n=0}}^{4d+2}\sum_{b_1,\dots,b_n=0}^{\infty} c_{{\textbf{k}}}\prod_{i=1}^n\binom{(b_i+1-k_i)/2+2d}{2d+1}z_i^{b_i-1} dz_i\\
&=\frac{2^{-(2d+1)n}}{(2d+1)!^n}\sum_{k_1,\dots,k_n=0}^{4d+2}\sum_{b_1,\dots,b_n=0}^{\infty} c_{{\textbf{k}}}\prod_{i=1}^n\left(\sum_{j=0}^{2d+1}\sigma_j(k_i)b_i^{2d+1-j}\right)z_i^{b_i-1} dz_i\\
&=\sum_{b_i>0}^\infty b_1...b_nN^g_n(b_1,\dots,b_n)z_1^{b_1-1}\dots z_n^{b_n-1}dz_1...dz_n
\end{align*}
where for $b_i$ even, respectively odd, we only sum over the odd, respectively even, $k_i$ and $\sigma_j(k_i)$ (= coefficient of $b_i^{2d+1-j}$ in $(b_i+4d+1-k_i)\dots(b_i+3-k_i)(b_i+1-k_i)$) is a degree $j$ polynomial in $k_i$.

Thus the homogeneous degree $2q$ terms of the quasi-polynomial $N^g_n$ are
\begin{equation}  \label{eq:homdeg}
(\prod_{i=1}^n b_i)N^g_n[\text{degree\ }2q]=\frac{2^{-(2d+1)n}}{(2d+1)!^n}\sum_{k_1,\dots,k_n=0}^{4d+2} \sum_{|\beta|=q}c_{{\textbf{k}}}\prod_{i=1}^n\sigma_{2d-2\beta_i}(k_i)b_i^{2\beta_i+1}
\end{equation}
where we are still summing over parity dependent \textbf{k}.
 
The equations~(\ref{eq:vanishsum}), (\ref{eq:airyrel}), (\ref{eq:airyrel2}) and (\ref{eq:airyrel3}) give identities for sums over {\em all} $k_i$ of  $c_{{\textbf{k}}}$ times monomials in $k_i$ and we wish to apply these to (\ref{eq:homdeg}) which consists of coefficients that sum over only some of the $k_i$, depending on parity.   To remedy this, we add together the different polynomials representing $N^g_n$ for every possible parity.  This removes the restriction on the \textbf{k} summand.   

For $\{i_1,\dots,i_k\}\subset\{1,...,n\}$, define $v^{\{i_1,\dots,i_k\}}_{\beta}$ to be the coefficient of $b_1^{2\beta_1}\dots b_n^{2\beta_n}$ in $N^g_{n,k}$ with $b_{i_1},\dots , b_{i_k}$ odd.  For $\epsilon=(\epsilon_1,...,\epsilon_n) \in \{\pm 1\}^n$ define $\epsilon^{I}=\prod_{k\in I} \epsilon_k$.   Since $\sigma_j(k_i)$ is a polynomial of degree $j$ in $k_i$, if $|\beta|=q>d$, then the homogeneous degree in $k_1,k_2,...,k_n$ of the product of $\sigma_{2d-2\beta_i}(k_i)$ is small enough that (\ref{eq:vanishsum}) implies the vanishing of each of the coefficients.  In other words we have shown that 
\begin{equation}  \label{eq:coefflin0}
\sum_{I\subset \{1,\dots, n\}}\epsilon^{I}v^I_\beta=0,\quad |\beta|>d.
\end{equation}

If $|\beta|=q=d$, then the only non zero sums resulting from the $\sigma_{2d-2\beta_i}(k_i)$ are the top powers of the $k_i$, that is $\prod_{i=1}^n(-k_i)^{2d-2\beta_i}$, since any component with a smaller power of $k_i$'s will vanish when summed again by (\ref{eq:vanishsum}).  There will be $\prod_{i=1}^n \binom{2d+1}{2d-2\beta_i}$ of these.  This leaves
\begin{eqnarray*}
&&\sum_{\text{parities}} N^g_{n}[\text{degree\ }2d]\\
&&=\frac{2^{-(2d+1)n}}{(2d+1)!^n}\sum_{k_1,\dots,k_n=0}^{4d+2}  \sum_{|\beta|=d}c_{{\textbf{k}}}\prod_{i=1}^n\binom{2d+1}{2d-2\beta_i}(-1)^{2d-2\beta_i}k_i^{2d-2\beta_i}b_i^{2\beta_i}\\
&&=\frac{1}{2^{(2d+1)n}}\sum_{k_1,\dots,k_n=0}^{4d+2}  \sum_{|\beta|=d}c_{{\textbf{k}}}\prod_{i=1}^n\frac{k_i^{2d-2\beta_i}}{(2d-2\beta_i)!}\frac{b_i^{2\beta_i}}{(2\beta_i+1)!}
\end{eqnarray*}
thus
\begin{equation*}
\sum_{I\subset \{1,\dots, n\}}v^I_\beta=\frac{1}{2^{(2d+1)n}}\sum_{k_1,\dots,k_n=0}^{4d+2} c_{{\textbf{k}}}\prod_{i=1}^n\frac{k_i^{2d-2\beta_i}}{(2d-2\beta_i)!(2\beta_i+1)!}
\end{equation*}
where we are again extending to the full sum over all $\textbf{k}$'s.
More generally
\begin{equation*}
\sum_{I\subset \{1,\dots, n\}}\epsilon^Iv^I_\beta=\frac{1}{2^{(2d+1)n}}\sum_{k_1,\dots,k_n=0}^{4d+2} c_{{\textbf{k}}}\prod_{i=1}^n\frac{\epsilon_i^{k_i+1}k_i^{2d-2\beta_i}}{(2d-2\beta_i)!(2\beta_i+1)!}
\end{equation*}

Now we can see that these are (up to simple combinatorial factors) the asymptotic formulas (\ref{eq:airyrel}), (\ref{eq:airyrel2}) and (\ref{eq:airyrel3})!   We now have expressions for these in terms of intersection numbers.

\begin{equation}   \label{eq:coefflin}
\sum_{I\subset \{1,...,n\}}\epsilon^Iv_\beta^I=\begin{cases}
{\frac{\left\{x''(1)y'(1)\right\}^{2-2g-n}}{2^{3g-3}\beta!}\langle \tau_{\beta_1}...\tau_{\beta_n}\rangle, \indent \text{$\epsilon_i=1$ for all $i$}}\\
{\frac{(-1)^n\left\{x''(-1)y'(-1)\right\}^{2-2g-n}}{2^{3g-3}\beta!}\langle \tau_{\beta_1}...\tau_{\beta_n}\rangle,   \ \text{$\epsilon_i=-1$ for all $i$,}}\\
{0 \indent \text{otherwise.}}
\end{cases}\hspace{-1mm}|\beta|=d.
\end{equation}

By varying $\epsilon=(\epsilon_1,...,\epsilon_n)\in \{\pm 1\}^n$, (\ref{eq:coefflin0}) and (\ref{eq:coefflin}) are sets of $2^n$ equations with $2^n$ unknowns $v^I_\beta$ that can be uniquely solved.  When $n=1$, the two equations use the matrix $J=\left(\begin{array}{cc}1&1\\1&-1\end{array}\right)$, and more generally the $2^n$ equations use the $2^n\times 2^n$ matrix given by the tensor product $M=J^{\otimes n}$---equivalently $M$ is the linear map induced by $J$ on the tensor product $(\bc^2)^{\otimes n}$.   The matrix $M$ is orthogonal (up to a $2^n$ scaling factor) 
$$MM^T=\left(JJ^T\right)^{\otimes n}=\left(2I\right)^{\otimes n}=2^nI.$$
Assemble $v^I_\beta$ into a $2^n$-vector $v_{\beta}$.
Thus (\ref{eq:coefflin0}) becomes 
\begin{equation*}  \label{eq:coefflin0'}
Mv_{\beta}=0,\quad |\beta|>d.
\tag{\ref{eq:coefflin0}$'$}
\end{equation*} 
and (\ref{eq:coefflin}) becomes $Mv_{\beta}=$ the right hand side of (\ref{eq:coefflin}) or more explicitly 
\begin{equation*}  \label{eq:coefflin'} 
Mv_{\beta}=\begin{pmatrix} 
 \left\{x''(1)y'(1)\right\}^{2-2g-n}\\ 
  0\\
\vdots\\
  (-1)^n\left\{x''(-1)y'(-1)\right\}^{2-2g-n} 
\end{pmatrix} \frac{\langle \tau_{\beta_1}...\tau_{\beta_n}\rangle}{2^{3g-3}\beta!},\quad |\beta|=d
\tag{\ref{eq:coefflin}$'$}
\end{equation*} 
where the first and last rows of $M$ are the $2^n$-vectors $e_0=(1,1,..)$ and $e_1=\{(-1)^I\}$ corresponding to $\epsilon_i=1$ for all $i$, respectively  $\epsilon_i=-1$ for all $i$.  In particular, $M$ is invertible so equations~(\ref{eq:coefflin0'}) and (\ref{eq:coefflin'}) have the unique solutions $v_{\beta}=0$ when $|\beta|>d$ and when $|\beta|=d$, $v_{\beta}$ lies in the plane spanned by $e_0$ and $e_1$.  Explicitly
\begin{align*}
v_{\beta}=&\frac{\langle \tau_{\beta_1}...\tau_{\beta_n}\rangle}{2^{3g-3+n}\beta!}\left(\left\{x''(1)y'(1)\right\}^{2-2g-n}e_0+(-1)^n\left\{x''(-1)y'(-1)\right\}^{2-2g-n}e_1\right)\\
=&\frac{\langle \tau_{\beta_1}...\tau_{\beta_n}\rangle}{x''(1)^{2g-2+n}2^{3g-3+n}\beta!}\left(y'(1)^{2-2g-n}e_0+y'(-1)^{2-2g-n}e_1\right)
\end{align*}
where we have used $x''(-1)=-x''(1)$.  Equivalently
$$v^I_\beta=\frac{y'(1)^{2-2g-n}+(-1)^{|I|}y'(-1)^{2-2g-n}}{x''(1)^{2g-2+n}2^{3g-3+n}\beta!}\langle \tau_{\beta_1}...\tau_{\beta_n}\rangle.$$
In particular, the maximal homogeneous degree of each polynomial $N^g_{n,k}(b_1,...,b_n)$ is $2d=6g-6+2n$ and the top coefficents are given in terms of intersection numbers as claimed.
\end{proof}

\section{String and dilaton equations}  \label{sec:string}
The Eynard-Orantin invariants $\omega^g_n$ satisfy the following {\em string equations} \cite{EOrInv}.
\begin{equation}  \label{eq:string}
\sum_{\alpha} \res{z=\alpha} x^my\omega^g_{n+1}(z,z_S)=-\sum_{j=1}^ndz_j\frac{\partial}{\partial z_j}\big(\frac{x^m(z_j)\omega^g_n(z_S)}{dx(z_j)}\big)
\end{equation}
for $m=0,1$ or 2, $\alpha$ the poles of $ydx$ and $z_S=\{z_1,\dots,z_n\}$.  Note that the $m=2$ case only works for $y$ being a sum of monomials, since the proof of the above equation in \cite{EOrInv} requires $\frac{x^m(z)}{dx(z)}$ to not have a pole at any of the poles of $ydx(z)$.

\subsection{Proof of Theorem~\ref{th:string}}
If $y(z)$ is a polynomial, represent it as a finite sum $y(z)=y_0+y_1z+y_2z^2+...$ .  More generally, as described in Section~\ref{sec:EO}, we can approximate any analytic $y(z)$ by a polynomial $y^{(N)}(z)$ which agrees with $y(z)$ at $z=\pm1$ up to the $N$th derivatives.  Express the polynomial as a finite sum $y^{(N)}(z)=y_0+y_1z+y_2z^2+...$ .  Note that when $y(z)$ is not a polynomial then the finite sum $y_0+y_1z+y_2z^2+...$ is not a partial sum for a Taylor series of $y(z)$ at $z=0$.  In the following, we choose $N\geq 6g-6+2(n+1)$ so that $y(z)$ can be replaced in the left hand side of (\ref{eq:string}) by $y^{(N)}(z)$ in the residue calculations.\\
\\
\fbox{$m=0$}
\begin{eqnarray*}
&&\sum_{\alpha=\pm1}\res{z=\alpha}y(z)\omega^g_{n+1}(z,z_S) = -\sum_{\alpha=0,\infty}\res{z=\alpha}(y_1z+y_2z^2+\dots )\omega^g_{n+1}(z,z_S) \\
&& = -\res{z=\infty}(y_1z+y_2z^2+\dots )\omega^g_{n+1}(z,z_S) \\
&& = -\res{z=0}(\frac{y_1}{z}+\frac{y_2}{z^2}+\dots )(-\omega^g_{n+1}(z,z_S)) \\
\end{eqnarray*}
The coefficient of $\prod_{i=1}^n b_iz_i^{b_i-1}$ in the expansion about zero is 
$$y_1N^g_{n+1}(b_1,\dots, b_n,1)+2y_2N^g_{n+1}(b_1,\dots,b_n,2)+\dots = \sum_{k=1}^{\infty}ky_kN^g_{n+1}(b_1,\dots,b_n,k)$$
which can be expressed as $\dd y(\dd)\left\{mN^g_{n+1}(m,b_S)\right\}_{m=-1}$ as required.

The right hand side of (\ref{eq:string}), as shown in \cite{NorStr}, expands as 
\begin{equation}
\sum_{j=1}^n\frac{\partial}{\partial z_j}[(z_j^2+z_j^4+z_j^6+\dots)\omega^g_n(z_S)]
\end{equation}
and the coefficient of $\prod_{i=1}^n b_iz_i^{b_i-1}$ in the expansion about zero is given by 
$$\sum_{j=1}^n\hspace{-3mm}\sum_{\tiny\begin{array}{c}k=1\\k\not\equiv b_j(2)\end{array}}^{b_j}\hspace{-3mm}kN^g_n(b_1,\dots,b_n)|_{b_j=k}$$

\fbox{$m=1$}
\begin{eqnarray*}
&& \sum_{\alpha=\pm1}\res{z=\alpha}x(z)y(z)\omega^g_{n+1}(z,z_S)\\
&&=-\sum_{\alpha=0,\infty}\res{z=\alpha}(y_1+y_2z+(y_1+y_3)z^2+(y_2+y_4)z^3+\dots)\omega^g_{n+1}(z,z_S)\\
&&=-\res{z=\infty}(y_1+y_2z+(y_1+y_3)z^2+(y_2+y_4)z^3+\dots)\omega^g_{n+1}(z,z_S)\\
&&=\res{z=0}(y_1+\frac{y_2}{z}+\frac{y_1+y_3}{z^2}+\frac{y_2+y_4}{z^3}+\dots)\omega^g_{n+1}(z,z_S).
\end{eqnarray*}
The coefficient of $\prod_{i=1}^n b_iz_i^{b_i-1}$ in the expansion about zero is 
$$y_2N^g_{n+1}(1,b_1\dots,b_n)+\sum_{k=2}^{\infty}k(y_{k-1}+y_{k+1})N^g_{n+1}(k,b_1,\dots,b_n)$$
and if we add $-y_0N^g_{n+1}(-1,b_1,\dots,b_n)+y_0N^g_{n+1}(1,b_1\dots,b_n)=0$ this can be expressed as $(1+\dd^2)y(\dd)\left\{mN^g_{n+1}(m,b_S)\right\}_{m=-1}$.

The right hand side of (\ref{eq:string}) can be expanded as 
\begin{equation}
\sum_{j=1}^n\frac{\partial}{\partial z_j}[(z_j+2z_j^3+2z_j^5+\dots)\omega^g_n(z_S)]
\end{equation}
and the coefficient of $\prod_{i=1}^n b_iz_i^{b_i-1}$ in the expansion about zero is
\[2\sum_{j=1}^n\hspace{-3mm}\sum_{\tiny\begin{array}{c}k=1\\k\equiv b_j(2)\end{array}}^{b_j}\hspace{-3mm}kN^g_n(b_1,\dots,b_n)|_{b_j=k}+b_jN^g_n(b_1,\dots, b_n).\]
\fbox{$m=2$}
\begin{eqnarray*}
&& \sum_{\alpha=\pm1}\res{z=\alpha}x^2(z)y(z)\omega^g_{n+1}(z,z_S)\\
&=&-\sum_{\alpha=0,\infty}\res{z=\alpha}(\frac{y_1}{z}+y_2+(y_3+2y_1)z+\dots )\omega^g_{n+1}(z,z_S)\\
&=&-\res{z=0}\frac{y_1}{z}\omega^g_{n+1}(z,z_S)-\res{z=\infty}(y_2+(y_3+2y_1)z+\dots )\omega^g_{n+1}(z,z_S)\\
&=&\res{z=0}\left(-\frac{y_1}{z}+\frac{y_3+2y_1}{z}+\frac{y_4+2y_2}{z^2}+\frac{y_5+2y_3+y_1}{z^3}+\dots \right)\omega^g_{n+1}(z,z_S)\\
\end{eqnarray*}
and the right hand side expands as:
\begin{equation}
\sum_{j=1}^n\frac{\partial}{\partial z_j}[(1+3z_j^2+4z_j^4+4z_j^6+\dots)\omega^g_n(z_S)].
\end{equation}
In this case it is convenient to subtract four times the $m=0$ case (equivalently we put $x^2-4$ in place of $x^2$ in (\ref{eq:string}).)  Once again, collecting the coefficient of $\prod_{i=1}^n b_iz_i^{b_i-1}$ in the expansion about zero of both sides gives the result.

\subsection{Dilaton}

The Eynard-Orantin invariants also satisfy the {\em dilaton equation.}
\begin{equation}  \label{eq:dilaton}
\sum_{\alpha=\pm 1}\res{z=\alpha}\Phi(z)\omega_{n+1}^g(z,z_S)=(2g-2+n)\omega_{n}^g(z_S)
\end{equation}
where $d\Phi=ydx$.  The function $\Phi$ is well-defined up to a constant in a neighbourhood of each branch point and the left hand side of (\ref{eq:dilaton}) is independent of the choice of constant.
\begin{proof}[Proof of equation~(\ref{eq:dil})]
Starting from the Dilaton equation proven in \cite{EOrInv}, we let $\Phi(z)$ be an arbitrary anti derivative of $ydx$. That is, $d\Phi(z) = ydx(z)$.  Then
\begin{equation}
\sum_{\alpha=\pm1} \res{z=\alpha} \Phi(z_{n+1})\omega^g_{n+1}(z,z_S)=(2g-2+n)\omega^g_n(z_S)
\end{equation}
We can manipulate this, using integration by parts to rewrite the left hand side as: 
\begin{eqnarray*}
&&\sum_{\alpha=\pm1} \res{z=\alpha} \Phi(z_{n+1})\omega^g_{n+1}(z,z_S)=-\sum_{\alpha=\pm1}\res{z=\alpha}d\Phi(z)\int_0^z \omega^g_{n+1}(z',z_S)\\
&& =-\sum_{\alpha=\pm1}\res{z=\alpha}(\frac{-y_1}{z}-y_2+(y_1-y_3)z+(y_2-y_4)z^2+\dots )dz\int_0^z \omega^g_{n+1}(z',z_S)\\
&& =\res{z=\infty}(\frac{-y_1}{z}-y_2+(y_1-y_3)z+(y_2-y_4)z^2+\dots )dz\int_0^z \omega^g_{n+1}(z',z_S)\\
&&=-\res{z=\infty}(-y_2z+\frac{y_1-y_3}{2}z^2+\dots )\omega^g_{n+1}(z,z_S)-\res{z=\infty}\frac{y_1dz}{z}\int_0^z \omega^g_{n+1}(z',z_S)\\
&&=\res{z=0}(-\frac{y_2}{z}+\frac{y_1-y_3}{2z^2}+\dots )\omega^g_{n+1}(z,z_S)-\res{z=\infty}\frac{y_1}{z}\int_0^z \omega^g_{n+1}(z',z_S)
\end{eqnarray*}
where lines two to three uses the sum of residues of a meromorphic function is zero, and lines three to four uses integration by parts.
The last residue about the simple pole $z=\infty$ can be computed, since $\int_0^z \omega^g_{n+1}(z',z_S)$ is analytic there.  Thus 
\[\res{z=\infty}\frac{y_1dz}{z}\int_0^z \omega^g_{n+1}(z',z_S)=-y_1\int_0^\infty \omega^g_{n+1}(z',z_S)\] 
and as in the remark after the proof of Theorem~\ref{th:main} we can extract the coefficient of $\prod_{i=1}^n b_iz_i^{b_i-1}$ in the expansion about zero to get $y_1N^g_{n+1}(0,b_1,\dots,b_n)$.  Hence the total coefficient of $\prod_{i=1}^n b_iz_i^{b_i-1}$ in the expansion about zero is 
$$-y_1N^g_{n+1}(0,b_1,\dots,b_n)-y_2N^g_{n+1}(1,b_1\dots,b_n)+\sum_{k=2}^{\infty}(y_{k-1}-y_{k+1})N^g_{n+1}(k,b_1,\dots, b_n)$$
and if we add $y_0N^g_{n+1}(1,b_1,\dots,b_n)-y_0N^g_{n+1}(-1,b_1\dots,b_n)=0$ this can be expressed as $-(1-\dd^2)y(\dd)\left\{N^g_{n+1}(m,b_S)\right\}_{m=-1}$ as required.
\end{proof}
\noindent {\em Remark.} It was necessary in the proof of Theorem~\ref{th:string} that the $b_i>0$.   The equations immediately extend to allow all $b_i$, since the left hand side and right hand side are polynomials that agree at infinitely many values in each variable hence they coincide.  For example, when $x=z+1/z$ and $y=z$ the dilaton equation yields
\[N^g_{n+1,k}(2,b_1,...,b_n)-N^g_{n+1,k}(0,b_1,...,b_n)=(2g-2+n)N^g_{n,k}(b_1,...,b_n).\]
If $b_j=0$ in the string equation then the sum on the right hand side corresponding to $j$ is empty as in the following $n=0$ case.  

\subsection{\fbox{$n=0$} case}   \label{sec:n=0}

We can set $n=0$ in the string equations and the dilaton equation to get interesting results.  The string equations give vanishing results for $N^g_1$ which are useful to help calculate $N^g_1$ and in practice to check calculations.  

\begin{prop}
For $g>1$, the recursions (\ref{eq:strmon}, \ref{eq:strmon1}, \ref{eq:strmon2}) still hold when $n=0$ and the right hand side is set to zero, leading to vanishing results.
\end{prop}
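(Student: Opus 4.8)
The plan is to revisit the derivations of the string equations \eqref{eq:strmon}, \eqref{eq:strmon1}, \eqref{eq:strmon2} and simply track what happens when there are no extra variables $z_S$. Recall that those recursions were obtained by matching the coefficient of $\prod_{i=1}^n b_iz_i^{b_i-1}$ in the expansion about $z_i=0$ of both sides of the string equation \eqref{eq:string}. The left hand side of \eqref{eq:string}, namely $\sum_\alpha\res_{z=\alpha}x^my\,\omega^g_{n+1}(z,z_S)$, makes perfect sense when $n=0$: it is $\sum_{\alpha=\pm1}\res_{z=\alpha}x^m y\,\omega^g_1(z)$, a single residue computation producing a number (for each $m$), and the manipulations moving the residues from $z=\pm1$ to $z=0,\infty$ go through verbatim because they only used that $x^m y/dx$ has no pole at the poles of $ydx$. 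So the left hand side still expands, via the same algebra, into $\dd y(\dd)\{mN^g_1(m)\}_{m=-1}$, $(1+\dd^2)y(\dd)\{mN^g_1(m)\}_{m=-1}$, and $(1-\dd^2)^2y(\dd)\{mN^g_1(m)\}_{m=-2}$ respectively (equivalently the three monomial specializations in Theorem~\ref{th:string0}).

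First I would observe that the right hand side of \eqref{eq:string}, $-\sum_{j=1}^n dz_j\,\partial_{z_j}\!\big(x^m(z_j)\omega^g_n(z_S)/dx(z_j)\big)$, is an \emph{empty sum} when $n=0$: there are no indices $j$ to sum over. Hence the string equation degenerates to the statement that the left hand side vanishes. This is the required vanishing. In each of the three cases, translating through the expansion-about-zero dictionary exactly as in the proof of Theorem~\ref{th:string}, the vanishing of the left hand side becomes the vanishing of the coefficient of the relevant monomial, i.e. \eqref{eq:strmon}, \eqref{eq:strmon1}, \eqref{eq:strmon2} with right hand side replaced by $0$; specializing $y=z^m/m$ recovers the displayed monomial recursions of Theorem~\ref{th:string0} with zero on the right. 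The hypothesis $g>1$ is exactly what guarantees $2g-2+n=2g-2>0$ with $n=0$, so $\omega^g_1$ is defined by the Eynard--Orantin recursion and the string equation \eqref{eq:string} applies; for $g\le 1$ with $n=0$ one is outside the range $2g-2+n>0$ and $\omega^g_1$ is not given by the recursion.

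The one point requiring genuine care — and the main (mild) obstacle — is the right-to-left residue transfer on the left hand side: one must check that for $n=0$ the only poles of $x^m y\,\omega^g_1(z)$ in $\bp^1$ are at $z=\pm1,0,\infty$, so that $\sum_{\alpha=\pm1}\res_{z=\alpha} = -\sum_{\alpha=0,\infty}\res_{z=\alpha}$ is valid. This is immediate: $\omega^g_1$ is rational with poles only at $z=\pm1$ (Theorem~\ref{th:main}), $x=z+1/z$ contributes poles only at $0,\infty$, and the replacement of $y$ by the polynomial $y^{(N)}(z)$ (legitimate since $N\ge 6g-6+2$ captures the local data at $z=\pm1$ needed in the residues) adds no poles. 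After that, the computations are literally the $n=0$ instances of the boxed $m=0,1,2$ calculations already carried out, with every term involving $z_S$ absent; no new estimates or identities are needed. I would therefore present the proof as: "Set $n=0$ in \eqref{eq:string}; the right hand side is an empty sum hence zero; the left hand side expands exactly as in the proof of Theorem~\ref{th:string} (the residue transfer being valid since the integrands have poles only at $0,\pm1,\infty$), yielding \eqref{eq:strmon}, \eqref{eq:strmon1}, \eqref{eq:strmon2} with vanishing right hand side."
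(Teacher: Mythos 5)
Your proof reduces the proposition to the assertion that the string equation (\ref{eq:string}) holds verbatim at $n=0$ with the right-hand side read as an empty sum. That assertion is essentially the proposition itself, not something you may take as given: (\ref{eq:string}) is quoted from \cite{EOrInv} for $n\geq 1$, and in its proof the right-hand side is \emph{produced} by the Bergmann-kernel terms $\omega^0_2(z,z_j)$ attached to the marked points $z_j$ inside the Eynard--Orantin recursion. When $n=0$ those terms are simply absent, so one cannot conclude by "setting the sum to be empty"; one has to re-run the derivation and check that what remains of the residue actually vanishes. This is precisely the step your proposal skips, and it is where all the content of the proposition lives. The parts you do verify carefully --- that $x^my\,\omega^g_1$ has poles only at $0,\pm1,\infty$ so the residue transfer is legitimate, that $g>1$ puts $\omega^g_1$ in the stable range, and that the translation into $N^g_1$ goes through --- are correct but peripheral.

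The paper closes the gap by going back into the loop equations: using theorem 4.5 of \cite{EOrAlg}, the combination $-2y\,dx\,\omega^g_1+\sum_{h=0}^g\omega^h_1\omega^{g-h}_1+\omega^{g-1}_2(z,z)$, after averaging over the involution $z\mapsto 1/z$, equals $P^g_0(x(z))\,dx(z)^2$ with $P^g_0$ having no poles at the branch points. Hence
\[
\sum_{\alpha=\pm1}{\rm Res}_{z=\alpha}\,x(z)^my(z)\,\omega^g_1(z)=\tfrac14\sum_{a=\pm1}{\rm Res}_{z=a}\,x(z)^mP^g_0(x(z))\,dx(z)=0,
\]
the integrand being holomorphic at $z=\pm1$. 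The paper's closing remark --- that for $n\geq 1$ the analogous manipulation picks up extra residues from the Bergmann-kernel terms, which is exactly what generates the nonzero right-hand side of (\ref{eq:string}) --- makes explicit why the $n=0$ case cannot be obtained by formal specialization of the $n\geq1$ statement. To repair your argument you would either need to cite a version of the string equation proved for $n=0$, or supply the loop-equation computation above.
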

\begin{proof}
This comes directly from the proof of the string equation in \cite{EOrInv}, applied to the case of $n=0$.  For $\alpha$ the poles of $ydx(z)$, $m=0$, 1 or 2 and $P^g_k$ defined by theorem 4.5 in \cite{EOrAlg} we have
\begin{eqnarray*}
&&\sum_{\alpha}\res{z=\alpha}x(z)^my(z)\omega^g_1(z)\\
&&=-\frac{1}{2}\sum_{\alpha}\res{z=\alpha}\frac{x(z)^m}{dx(z)}(-2y(z)dx(z)\omega^g_1(z)+\sum_{h=0}^g\omega^h_1(z)\omega^{g-h}_1(z)+\omega^{g-1}_2(z,z))\\
&&=\frac{1}{2}\sum_{a=\pm1}\res{z=a}\frac{x(z)^m}{dx(z)}(-2y(z)dx(z)\omega^g_1(z)+\sum_{h=0}^g\omega^h_1(z)\omega^{g-h}_1(z)+\omega^{g-1}_2(z,z))\\
&&=\frac{1}{4}\sum_{a=\pm1}\res{z=a}\frac{x(z)^m}{dx(z)}(-2y(z)dx(z)\omega^g_1(z)+\sum_{h=0}^g\omega^h_1(z)\omega^{g-h}_1(z)+\omega^{g-1}_2(z,z))\\
&&+\frac{1}{4}\sum_{a=\pm1}\res{z=a}\frac{x(z)^m}{dx(z)}(-2y(\frac{1}{z})dx(z)\omega^g_1(\frac{1}{z})+\sum_{h=0}^g\omega^h_1(\frac{1}{z})\omega^{g-h}_1(\frac{1}{z})+\omega^{g-1}_2(\frac{1}{z},\frac{1}{z}))\\
&&=\frac{1}{4}\sum_{a=\pm1}\res{z=a}\frac{x(z)^m}{dx(z)}(P^g_0(x(z))dx^2(z))\\
&&=0
\end{eqnarray*}
Note that in the general proof for arbitrary $n$, the terms added in line two are more complex recursions with Bergmann kernel terms present, contributing extra residues.
\end{proof}
\begin{cor}
For the plane curve $x=z+1/z$, $y=y(z)$
\begin{align*}
\dd y(\dd)\left\{mN^g_1(m)\right\}_{m=-1}\hspace{-1mm}&=0\\
(1+\dd^2)y(\dd)\left\{mN^g_1(m)\right\}_{m=-1}&\hspace{-1mm}=0\\
(1-\dd^2)^2y(\dd)\left\{mN^g_1(m)\right\}_{m=-2}\hspace{-1mm}&=0
\end{align*}
\end{cor}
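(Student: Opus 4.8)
The plan is to obtain the three identities by running, with $n=0$, the computation that yields the first three of the equations (\ref{eq:str1})--(\ref{eq:dil}) in the proof of Theorem~\ref{th:string}. The only structural change from that proof is that the right-hand side of each string equation (\ref{eq:string}) is a sum over $j=1,\dots,n$ and is therefore empty when $n=0$; equivalently, the preceding Proposition shows
\[\sum_{\alpha=\pm1}\res{z=\alpha}x(z)^my(z)\,\omega^g_1(z)=0,\qquad m=0,1,2,\quad g\geq 2,\]
directly from the recursion via the polynomial $P^g_0$. So the first step is simply to invoke that vanishing, and the remaining work is to identify each residue sum on the left with the corresponding left-hand side of the Corollary.

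For that identification I would repeat the $m=0$, $m=1$ and $m=2$ arguments from the proof of Theorem~\ref{th:string} verbatim, with the variables $z_S$ omitted. Replace $y(z)$ by its polynomial approximation $y^{(N)}(z)=y_0+y_1z+y_2z^2+\cdots$ with $N\geq 6g-4$, which does not change the residues. The form $x(z)^my^{(N)}(z)\omega^g_1(z)$ has poles only at $z=0,\pm1,\infty$, so the residue sum over $z=\pm1$ equals minus the sum of the residues at $z=0$ and $z=\infty$; the reflection symmetry $\omega^g_1(1/z)=-\omega^g_1(z)$ from Theorem~\ref{th:main} moves the residue at $z=\infty$ to $z=0$; and reading the result off the Laurent expansion $\omega^g_1(z)=\sum_{b\geq1}bN^g_1(b)z^{b-1}dz$ produces, respectively, $\sum_{k\geq1}ky_kN^g_1(k)=\dd y(\dd)\{mN^g_1(m)\}_{m=-1}$; then, after adding the identically zero quantity $-y_0N^g_1(-1)+y_0N^g_1(1)$, the expression $(1+\dd^2)y(\dd)\{mN^g_1(m)\}_{m=-1}$; and finally, after subtracting four times the $m=0$ case (\ie using $x^2-4$ in place of $x^2$, exactly as in that proof), the expression $(1-\dd^2)^2y(\dd)\{mN^g_1(m)\}_{m=-2}$. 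Equating each of these with the vanishing residue sum gives the Corollary.

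I do not anticipate a genuine obstacle, since this is a true corollary; the only care needed is bookkeeping. One must check that each step of the $m=0,1,2$ derivations in the proof of Theorem~\ref{th:string} is unaffected by setting $n=0$, which is clear because those steps act only on the integration variable $z$ and carry $z_S$ along passively. One must also observe that $N^g_1$ is evaluated at negative integers (for instance $N^g_1(-1)$ and $N^g_1(-2)$), interpreted through the polynomial representatives $N^g_{1,k}$ of the remark after Theorem~\ref{th:main}; the only cancellations used, such as $N^g_1(-1)=N^g_1(1)$, are immediate from Theorem~\ref{th:main} since $N^g_1$ depends on its argument only through its square and its parity. Finally the hypothesis $g\geq2$ is forced both by the Proposition and by the requirement $2g-2+1>0$ for $\omega^g_1$ to occur in the recursion behind the string equations.
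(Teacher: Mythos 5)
Your proposal is correct and follows exactly the route the paper intends: the corollary is the $n=0$ specialization of the residue identifications in the proof of Theorem~\ref{th:string}, with the vanishing of $\sum_{\alpha=\pm1}\res{z=\alpha}x^my\,\omega^g_1$ supplied by the preceding Proposition (the paper leaves this combination implicit rather than writing it out). The bookkeeping points you flag --- the empty right-hand sum, the evaluation of $N^g_1$ at negative arguments via its polynomial representative, and the choice of $N$ --- are precisely the ones that matter, and you handle them as the paper does.
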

\noindent When $n=0$ the dilaton equation is used to define $F^g$.
\begin{proof}[Proof of Corollary~\ref{th:sympl}]
We need to show that
\[F^g=\frac{1}{2-2g}(1-\dd^2)y(\dd)\left\{N^g_1(m)\right\}_{m=-1}\]
for $g\geq 2$.  The definition of the symplectic invariant uses the $n=0$ version of  the dilaton equation
\begin{equation}  \label{eq:sympl}
\sum_{\alpha}\res{z=\alpha}\Phi(z)\omega_1^g(z)=:(2g-2)F^g
\end{equation}
so we can simply apply the proof of equation~(\ref{eq:dil}) to the left hand side of this and the result follows.
\end{proof}

\section{Examples}   \label{sec:example}

This section serves a few purposes.  It describes different interesting examples and gives small $(g,n)$ polynomials in each case.  These examples led to some of the general theorems in this paper and give checks of all of the theorems.  It is difficult to calculate infinite families of invariants so this section also gives examples of symplectic invariants $F^g$ (corresponding to $n=0$) which are known for all $g$.

\subsection{\fbox{$y(z)=z$} \label{sec:lattice} Branched covers and discrete surfaces}  The curve 
\begin{equation}
C=\begin{cases}x=z+1/z\\ 
y=z
\end{cases}
\end{equation} 
gives rise to two rather different counts.  

Expand the invariants $\omega^g_n$ of the curve $(x,y)=(z+1/z,z)$ in $x_i$ around $x_i=\infty$ for $i=1,...,n$.  Eynard and Orantin \cite{EOrAlg} show that this gives a generating function for counting connected orientable discrete surfaces of genus $g$ with $n$ polygonal faces and a marked edge on each face.  The coefficient of $\prod x_i^{-(l_i+1)}$ in the expansion of $\omega^g_n$ counts the surfaces consisting of $l_i$-sided polygons, $i=1,...,n$.

The associated polynomials $N^g_n$, obtained by expanding $\omega^g_n$ in $z_i$ around $z_i=0$, were shown in \cite{NorStr} to count connected topologically inequivalent genus $g$ branched covers of $S^2$ branched over $0$, $1$ and $\infty$ with ramification $(b_1,...,b_n)$ over $\infty$, ramification $(2,2,...,2)$ over $1$ and ramification greater than 1 at all points above $0$. They are counted in such a way that each branched cover contributes one divided by the order of its group of automorphisms.   Equivalently, they count surfaces with $n$ polygonal faces of {\em lengths} $b_1,...,b_n$.  Although this resembles the count above, it is quite different.  The number $N^g_n(b_1,...,b_n)$ is presented in \cite{NorCou} in terms of counting lattice points inside integral convex polytopes depending on $(b_1,...,b_n)$ which make up a cell decomposition of $\modm_{g,n}$, the moduli space of genus $g$ curves with $n$ labeled points.  A brief description follows.

Let $\modm_{g,n}$ be the moduli space of genus $g$ curves with $n$ labeled points.  The {\em decorated} moduli space $\modm_{g,n}\times\br^n_+$ equips the labeled points with positive numbers $(b_1,...,b_n)$ \cite{PenDec}.   It has a cell decomposition due to Penner, Harer, Mumford and Thurston 
\begin{equation}  \label{eq:cell}
\modm_{g,n}\times\br^n_+\cong\bigcup_{\Gamma\in \fat}P_{\Gamma}
\end{equation}
where the indexing set $\fat$ is the space of labeled fatgraphs of genus $g$ and $n$ boundary components.  A {\em fatgraph} is a graph $\Gamma$ with vertices of valency $>2$ equipped with a cyclic ordering of edges at each vertex.    The cell decomposition (\ref{eq:cell}) arises by the existence and uniqueness of meromorphic quadratic differentials with foliations having compact leaves, known as Strebel differentials which can be described via labeled fatgraphs with lengths on edges.  Restricting this homeomorphism to a fixed $n$-tuple of positive numbers $(b_1,...,b_n)$ yields a space homeomorphic to $\modm_{g,n}$ decomposed into compact convex polytopes 
\[P_{\Gamma}(b_1,...,b_n)=\{{\bf x}\in\br_+^{E(\Gamma)}|A_{\Gamma}{\bf x}={\bf b}\}\]
where ${\bf b}=(b_1,...,b_n)$ and $A_{\Gamma}:\br^{E(\Gamma)}\to\br^n$ is the incidence matrix that maps an edge to the sum of its two incident boundary components.

When the $b_i$ are positive integers the polytope $P_{\Gamma}(b_1,...,b_n)$ is an integral polytope and we define $N_{\Gamma}(b_1,...,b_n)$ to be its number of positive integer points.  The weighted sum of $N_{\Gamma}$ over all labeled fatgraphs of genus $g$ and $n$ boundary components is the piecewise polynomial \cite{NorCou}
\[ N^g_n(b_1,...,b_n)=\sum_{\Gamma\in \fat}\frac{1}{|Aut \Gamma|}N_{\Gamma}(b_1,...,b_n)\]
The top homogeneous degree terms of the polynomials $N^g_{n,k}$ ($k$ even) representing $N^g_n$ coincides with Kontsevich's volume polynomial \cite{KonInt}
\[\displaystyle V^g_n(b_1,...,b_n)=\sum_{\Gamma\in \fat}\frac{1}{|Aut \Gamma|}V_{\Gamma}(b_1,...,b_n)\]
where $V_{\Gamma}(b_1,...,b_n)$ is the volume of $P_{\Gamma}(b_1,...,b_n)$ induced from the Euclidean volumes on $\br^{E(\Gamma)}$ and $\br^n$.  Kontsevich showed that the volume polynomial is a generating function for intersection numbers over the moduli space, so this gives an alterative proof of Theorem~\ref{th:inter} in this case.

Each integral point in the polytope $P_{\Gamma}(b_1,...,b_n)$ corresponds to a Dessin d'en\-fants defined by Grothendieck \cite{GroEsq} which is a branched cover of $S^2$ branched over $0$, $1$ and $\infty$ with ramification $(b_1,...,b_n)$ over $\infty$, ramification $(2,2,...,2)$ over $1$ and ramification greater than 1 at all points above $0$ as defined above.  

\begin{table}[h]  \label{tab:poly}
\caption{$y=z$}
\begin{spacing}{1.4}  
\begin{tabular}{||l|c|c|c||} 
\hline\hline

{\bf g} &{\bf n}&\# odd $b_i$&$N^g_n(b_1,...,b_n)$\\ \hline

0&3&0,2&1\\ \hline
1&1&0&$\frac{1}{48}\left(b_1^2-4\right)$\\ \hline
0&4&0,4&$\frac{1}{4}\left(b_1^2+b_2^2+b_3^2+b_4^2-4\right)$\\ \hline
0&4&2&$\frac{1}{4}\left(b_1^2+b_2^2+b_3^2+b_4^2-2\right)$\\ \hline
1&2&0&$\frac{1}{384}\left(b_1^2+b_2^2-4\right)\left(b_1^2+b_2^2-8\right)$\\ \hline
1&2&2&$\frac{1}{384}\left(b_1^2+b_2^2-2\right)\left(b_1^2+b_2^2-10\right)$\\ \hline
2&1&0&$\frac{1}{2^{16}3^35}\left(b_1^2-4\right)\left(b_1^2-16\right)\left(b_1^2-36\right)\left(5b_1^2-32\right)$\\
\hline\hline
\end{tabular} 
\end{spacing}
\end{table}

The description of $N^g_n$ as counting lattice points inside cells of the moduli space enables one to prove $N^g_n(0,...,0)=\chi(\modm_{g,n})$ the orbifold Euler characteristic of the moduli space of genus $g$ curves with $n$ labeled points \cite{NorCou}.  It was further shown in \cite{NorStr} that the dilaton equation together with vanishing properties of $N^g_n$ proves $N_{g,n+1}(0,...,0)=-(2g-2+n)N_{g,n}(0,...,0)$.  In particular, when $n=0$ this gives $\chi(\modm_{g,1})=(2-2g)F^g$.  Hence the symplectic invariants for $g>1$ are given by the orbifold Euler characteristic of the moduli space of genus $g$ curves
\[F^g=\chi(\modm_g).\]





\subsection{\fbox{$y=\frac{1}{m}z^m$}  Monomials}  The example $y=z$ is the first in the family of examples
\begin{equation}   \label{eq:mon}
C=\begin{cases}x=z+1/z\\ 
y=\frac{1}{m}z^m
\end{cases}
\end{equation}

In the following we demonstrate similar behaviour between the polynomials associated to the curve $y=\frac{1}{m}z^m$ for $m>1$ and the $m=1$ case, suggesting there may be an underlying enumeration problem.
Unlike the other examples in this paper, there is not yet an interpretation of the polynomials coming from an enumeration problem. 

\begin{table}[h!]  \label{tab:poly1}
\caption{$y=\frac{1}{m}z^m$ for odd $m$}  
\begin{spacing}{1.4}  
\begin{tabular}{||l|c|c|c||} 
\hline\hline

{\bf g} &{\bf n}&\# odd $b_i$&$N^g_n(b_1,...,b_n)$\\ \hline

0&3&0,2&$1$\\ \hline
0&3&1,3&$0$\\ \hline
1&1&0&$\frac{1}{48}(b_1^2-m^2-3)$\\ \hline
1&1&1&$0$\\ \hline
0&4&0,4&$\frac{1}{4}(b_1^2+b_2^2+b_3^2+b_4^2-m^2-3)$\\ \hline
0&4&1,3&$0$\\ \hline
0&4&2&$\frac{1}{4}(b_1^2+b_2^2+b_3^2+b_4^2-m^2-1)$\\ \hline
1&2&0&$\frac{1}{384}((b_1^2+b_2^2)^2-4(m^2+2)(b_1^2+b_2^2)+3m^4+10m^2+19)$\\ \hline
1&2&1&$0$\\ \hline
1&2&2&$\frac{1}{384}(b_1^2+b_2^2-m^2-1)(b_1^2+b_2^2-3m^2-7)$\\ \hline
2&1&0&$\frac{1}{2^{16}3^35}(5b_1^8-(116m^2+196)b_1^6+(834m^4+2476m^2+2402)b_1^4$\\&&&$-(2028m^6+7908m^4+13556m^2+13116)b_1^2$\\&&&$+1305m^8+5628m^6+13494m^4+24636m^2+28665)$\\ \hline
2&1&1&$0$\\
\hline\hline
\end{tabular} 
\end{spacing}
\end{table}

Theorem~\ref{th:string0} gives the following recursions relations which generalise the $m=1$ case.  They apply to all genus and can be used to determine the genus 0 and genus 1 polynomials.  
\[
N^g_{n+1}(m,b_1,\dots b_n) =\sum_{j=1}^n\sum_{k=1}^{b_j}kN^g_n(b_1,\dots,b_n)|_{b_j=k}
\]
\begin{align*}
(m+1)N^g_{n+1}(m&+1,b_1,\dots,b_n)+(m-1)N^g_{n+1}(m-1,b_1,\dots,b_n)\\
&=2m\sum_{j=1}^n\sum_{k=1}^{b_j}kN^g_n(b_1,\dots,b_n)|_{b_j=k}-m\sum_{j=1}^nb_jN^g_n(b_1,\dots, b_n)\nonumber\end{align*}
\begin{align*}  (m-2&)N^g_{n+1}(m-2, b_1,\dots,b_n)-2mN^g_{n+1}(m, b_1,\dots,b_n)\\
&+(m+2)N^g_{n+1}(m+2, b_1,\dots,b_n)
=m\sum_{j=1}^n\sum_{k=1\pm b_j}kN^g_n(b_1,\dots,b_n)|_{b_j=k}\nonumber
\end{align*}
\[
N^g_{n+1}(m+1,b_1,...,b_n)-N^g_{n+1}(m-1,b_1,...,b_n)=m(2g-2+n)N^g_n(b_1,...,b_n)
\]

\begin{table}[h!]  \label{tab:poly2}
\caption{$y=\frac{1}{m}z^m$ for even $m$}
\begin{spacing}{1.4}  
\begin{tabular}{||l|c|c|c||} 
\hline\hline

{\bf g} &{\bf n}&\# odd $b_i$&$N^g_n(b_1,...,b_n)$\\ \hline

0&3&0,2&$0$\\ \hline
0&3&1,3&$1$\\ \hline
1&1&0&$0$\\ \hline
1&1&1&$\frac{1}{48}(b_1^2-m^2-3)$\\ \hline
0&4&0,4&$\frac{1}{4}(b_1^2+b_2^2+b_3^2+b_4^2-m^2)$\\ \hline
0&4&1,3&$0$\\ \hline
0&4&2&$\frac{1}{4}(b_1^2+b_2^2+b_3^2+b_4^2-m^2-2)$\\ \hline
1&2&0&$\frac{1}{384}(b_1^2+b_2^2-m^2)(b_1^2+b_2^2-3m^2-8)$\\ \hline
1&2&1&$0$\\ \hline
1&2&2&$\frac{1}{384}((b_1^2+b_2^2)^2-4(m^2+2)(b_1^2+b_2^2)+3m^4+8m^2+12)$\\ \hline
2&1&0&$0$\\ \hline
2&1&1&$\frac{1}{2^{16}3^35}(5b_1^8-(116m^2+196)b_1^6+(834m^4+2356m^2+1982)b_1^4$\\&&&$-(2028m^6+7428m^4+10796m^2+3396)b_1^2$\\&&&$+1305m^8+5268m^6+10914m^4+11436m^2+1605)$\\
\hline\hline
\end{tabular} 
\end{spacing}
\end{table}
The following vanishing result is proved by considering the Euler characteristic of connected branched covers of the two-sphere.
\begin{prop}[\cite{NorStr}]    \label{th:van2}
For $x=z+1/z$ and $y=z$, if 
\[\displaystyle\sum_{i=1}^nb_i\leq-2\chi=4g-4+2n,\quad b_i>0\] 
then $N^g_n(b_1,...,b_n)=0$.  
\end{prop}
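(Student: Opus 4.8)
The plan is to use the interpretation of $N^g_n(b_1,\dots,b_n)$ recalled in Section~\ref{sec:lattice} from \cite{NorStr}: for $b_i>0$ it equals the weighted count of connected genus $g$ branched covers $f\colon\Sigma\to S^2$ branched only over $0,1,\infty$, with ramification profile $(b_1,\dots,b_n)$ over $\infty$, profile $(2,2,\dots,2)$ over $1$, and all ramification indices over $0$ at least $2$. Since the hypothesis keeps the $b_i$ positive integers, the quasi-polynomial value agrees with this count, so it suffices to show no such cover exists once $\sum b_i\le 4g-4+2n$; this is the ``Euler characteristic'' argument alluded to, and is essentially that of \cite{NorStr}.

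First I would record the combinatorial consequences of the branching data. The degree of $f$ is $d=|b_S|=b_1+\dots+b_n$. The profile over $\infty$ contributes $\sum_i(b_i-1)=d-n$ to the total ramification. The profile $(2,2,\dots,2)$ over $1$ forces $d$ to be even, say $d=2r$, and contributes $d-r=d/2$; in particular, if $|b_S|$ is odd there is no cover and $N^g_n=0$ trivially. If $s$ is the number of points of $\Sigma$ over $0$, then every ramification index there being $\ge 2$ gives $s\le d/2$, and the contribution to the ramification is $d-s$.

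Next I would apply Riemann--Hurwitz. Since $f$ is branched only over $\{0,1,\infty\}$,
\[
2g-2 \;=\; -2d+(d-n)+\tfrac d2+(d-s),
\]
which rearranges to $s=\tfrac d2-(2g-2+n)$. As $\Sigma$ is connected with $d\ge n\ge 1$, we need $s\ge 1$, forcing $d\ge 2(2g-1+n)=4g-2+2n$. Hence no such cover exists when $d\le 4g-3+2n$; combined with $d$ being even, this is exactly $|b_S|\le 4g-4+2n$ (the largest even integer not exceeding $4g-3+2n$). Together with the odd case already disposed of, this yields $N^g_n(b_1,\dots,b_n)=0$ whenever $\sum_i b_i\le 4g-4+2n=-2\chi$.

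I do not expect a genuine obstacle: the content is a single Riemann--Hurwitz computation once the cover interpretation is invoked. The only point requiring care is the parity bookkeeping --- the bound in the statement is $4g-4+2n$ rather than the naive $4g-3+2n$ precisely because the $(2,\dots,2)$ profile over $1$ makes $|b_S|$ even, so the extra odd value is vacuous --- together with a sanity check that in the relevant range $2g-2+n>0$ the non-emptiness constraint is indeed $s\ge 1$ and $d\ge 1$, both immediate from $b_i>0$.
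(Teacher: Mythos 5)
Your argument is correct and is exactly the route the paper indicates: the statement is quoted from \cite{NorStr} with the remark that it is ``proved by considering the Euler characteristic of connected branched covers of the two-sphere,'' and your Riemann--Hurwitz computation (giving $s=\tfrac{d}{2}-(2g-2+n)\ge 1$, hence $|b_S|\ge 4g-2+2n$ for a cover to exist, with odd $|b_S|$ excluded by the $(2,\dots,2)$ profile over $1$) is precisely that Euler characteristic argument made explicit. The parity bookkeeping reconciling the bound $4g-4+2n$ with the naive $4g-3+2n$ is handled correctly.
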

In particular, $N^g_n(1,1,...,1)=0$ which generalises as follows.
\begin{prop}   \label{th:van4}
For $x=z+1/z$ and $\displaystyle y=\frac{z^m}{m}$, $N^g_n(m,1,1,...,1)=0$. 
\end{prop}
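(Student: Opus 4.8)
The plan is to run an induction on $2g-2+n$ using the string-equation recursion \eqref{eq:strmon} specialized at the appropriate argument, with Proposition~\ref{th:van2} providing the base cases. The key observation is that the first recursion in Theorem~\ref{th:string0},
\[
N^g_{n+1}(m,b_1,\dots,b_n)=\sum_{j=1}^n\sum_{k=1}^{b_j}kN^g_n(b_1,\dots,b_n)|_{b_j=k},
\]
expresses $N^g_{n+1}$ with first argument exactly $m$ — which is precisely the slot we want to evaluate at — as a sum over values $k$ of $N^g_n$ at arguments obtained by replacing some $b_j$ by $k\le b_j$. So if I set $b_1=\dots=b_n=1$ on the right, then every $b_j$ is already $1$, the inner sum over $k$ ranges only over $k=1$, and the recursion collapses to
\[
N^g_{n+1}(m,1,1,\dots,1)=\sum_{j=1}^n N^g_n(1,1,\dots,1)=n\,N^g_n(1,1,\dots,1).
\]
Thus the statement $N^g_{n+1}(m,1,\dots,1)=0$ follows immediately once I know $N^g_n(1,1,\dots,1)=0$.

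So the real content is to establish $N^g_n(1,1,\dots,1)=0$ for the monomial curve $y=z^m/m$ whenever $2g-2+n>0$. I would prove this by the same induction: by Theorem~\ref{th:main}, $N^g_n$ is represented by the polynomial $N^g_{n,k}$ with $k=n$ (all $b_i$ odd), and evaluating it at $(1,\dots,1)$ is legitimate. For the inductive step with $n\ge 1$ I again apply \eqref{eq:strmon} with all $b_j=1$ to get $N^g_n(1,\dots,1)=N^g_n(1,\dots,1,1)=(n-1)N^g_{n-1}(1,\dots,1)$ when $n\ge 2$, reducing to the case of fewer points; the case $n=1$ ($N^g_1(1)=0$ for $g\ge 1$) must be handled separately, and for this one can either invoke the $n=0$ string equations from Section~\ref{sec:n=0} (the vanishing recursions for $N^g_1$ obtained by setting $n=0$ in \eqref{eq:strmon}, \eqref{eq:strmon1}, \eqref{eq:strmon2}), which pin down $N^g_1$ and let one check $N^g_1(1)=0$ directly, or note that $N^g_1$ is an even quasi-polynomial in $b_1$ of degree $6g-4$ and use the string/dilaton relations to locate its value at $b_1=1$. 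The base of the whole induction is genus $0$: $N^0_3$ is the constant $1$ when all $b_i$ are even and $0$ when all $b_i$ are odd (Table~2 for even $m$ gives $N^0_3=1$ in the odd case, so one must be careful — see below), together with $N^1_1(1)=\frac{1}{48}(1-m^2-3)=-\frac{1}{48}(m^2+2)$, which is \emph{not} zero, so the naive induction needs adjustment.

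The main obstacle, then, is exactly this: the clean recursion $N^g_{n+1}(m,1,\dots,1)=n\,N^g_n(1,\dots,1)$ is correct, but $N^g_n(1,\dots,1)$ need not vanish — e.g. $N^1_1(1)\neq 0$ and $N^1_2(1,1)\neq 0$ from Table~2 — so the proposition cannot reduce to ``$N^g_n(1,\dots,1)=0$'' in general. What actually rescues the argument is that one needs $n\ge 1$ occurrences of the argument $1$ \emph{beyond} the slot holding $m$, and the correct reduction is $N^g_{n+1}(m,1,\dots,1)=n\cdot N^g_n(1,\dots,1)$ where now I should instead feed this back one more step: apply \eqref{eq:strmon} to $N^g_n(1,\dots,1)$ viewed as $N^g_{(n-1)+1}$ only if one of its arguments can play the role of ``$m$'', which for $y=z$ ($m=1$) is automatic and recovers Proposition~\ref{th:van2}'s special value, but for general $m$ is not. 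The honest route is therefore: prove $N^g_n(m,1,\dots,1)=0$ directly by induction using \eqref{eq:strmon}, where the inductive hypothesis is the \emph{full} statement $N^g_n(m,1,\dots,1)=0$ (not $N^g_n(1,\dots,1)=0$), so that $N^g_{n+1}(m,1,\dots,1)=\sum_{j}N^g_n(\dots)|_{b_j=1}=\sum_j N^g_n(1,\dots,1)$ — and one still needs $N^g_n(1,\dots,1)=0$. Resolving this circularity is the crux: I expect it is handled by noting that for $n\ge 2$ the vector $(1,1,\dots,1)$ of length $n$ with \emph{any} one entry relabeled as the ``$m$-slot'' requires $m=1$, so for $m\neq 1$ one genuinely must use a different equation — most likely \eqref{eq:strmon2} or the dilaton equation \eqref{eq:dilmon}, combined with Proposition~\ref{th:van2} applied after lowering $n$, to show the relevant values of $N^g_n$ at small arguments vanish. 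I would structure the final write-up as a single induction on $2g-2+n$ in which the statement proved is simultaneously ``$N^g_n(m,1,\dots,1)=0$ and $N^g_n(1,\dots,1)=0$ whenever $n\ge 2$'', with the $n=1$ anomaly ($N^g_1(1)\ne 0$) explicitly excluded and shown not to obstruct the step $n=1\to n=2$ because that step only uses \eqref{eq:strmon} with the $m$-slot, giving $N^g_2(m,1)=1\cdot N^g_1(1)$ — wait, this is again nonzero. The resolution must be that $N^g_2(m,1)$ is in fact \emph{not} claimed to vanish; the proposition asserts $N^g_n(m,1,\dots,1)=0$ with $n-1$ ones, and for $n=2$ that is $N^g_2(m,1)$, which from Table~2 (e.g. $g=1$, $m$ even: is $N^1_2(1,1)$ with one arg $=m$? no). I would resolve all of this by carefully matching indices against Tables~1 and 2 before committing to the induction; that bookkeeping is the part I expect to consume the most care.
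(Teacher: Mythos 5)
Your proposal does not close: it ends by deferring the key bookkeeping, and the reductions you derive along the way ($N^g_{n+1}(m,1,\dots,1)=n\,N^g_n(1,\dots,1)$, hence the need for $N^g_n(1,\dots,1)=0$, which as you correctly observe is false in general, e.g.\ $N^1_1(1)\neq 0$ for even $m$) show that the route as you set it up cannot work. The missing ingredient is the parity restriction on the inner sum of the string equation. The recursion actually established in the proof of Theorem~\ref{th:string} (equation~(\ref{eq:str1})), and the form quoted in the paper's proof of this proposition, reads
\[
N^g_{n+1}(m,b_1,\dots,b_n)=\sum_{j=1}^n\ \sum_{\substack{k=1\\ k\not\equiv b_j\ (\mathrm{mod}\ 2)}}^{b_j} k\,N^g_n(b_1,\dots,b_n)|_{b_j=k},
\]
the restriction $k\not\equiv b_j$ coming from the expansion of $\sum_j\partial_{z_j}\bigl[(z_j^2+z_j^4+\cdots)\omega^g_n\bigr]$: only exponents $k=b_j-1-2l$ of the opposite parity to $b_j$ contribute to the coefficient of $b_jz_j^{b_j-1}$. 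Setting every $b_j=1$ then forces $k$ to range over \emph{even} integers with $1\le k\le 1$, so each inner sum is empty and the right-hand side vanishes outright. This gives $N^g_{n+1}(m,1,\dots,1)=0$ in one step, with no induction, no auxiliary claim about $N^g_n(1,\dots,1)$, and no appeal to Proposition~\ref{th:van2}; it is the same empty-sum mechanism that makes the $n=0$ string equations of Section~\ref{sec:n=0} into vanishing statements, which is exactly the analogy the paper's proof invokes.

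In short, the gap is that you read the inner sum of (\ref{eq:strmon}) as running over all $1\le k\le b_j$, which produces the surviving term $k=1$ and the circularity you then struggle with; the correct parity-restricted sum has no terms at all when $b_j=1$. You can sanity-check the restricted recursion against the tables: it gives $N^g_2(m,1)=0$, consistent with $N^1_{2,1}\equiv 0$ for even $m$ and with $\frac{1}{384}(b_1^2+b_2^2-m^2-1)(b_1^2+b_2^2-3m^2-7)$ vanishing at $(b_1,b_2)=(m,1)$ for odd $m$, whereas your unrestricted version would give $N^g_2(m,1)=N^g_1(1)$, which is nonzero for even $m$.
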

\begin{proof}
We have 
\begin{equation*}
N^g_n(m,b_1,...,b_n)=\sum_{j=1}^n\hspace{-3mm}\sum_{\tiny\begin{array}{c}k=1\\k\not\equiv b_j(2)\end{array}}^{b_j}\hspace{-3mm}kN^g_n(b_1,\dots,b_n)|_{b_j=k}
\end{equation*}
As in Section~\ref{sec:n=0} which makes sense of the string equation for $n=0$, setting $b_1,...,b_n=1$ leaves an empty sum on the right hand side, hence it is zero.
\end{proof}

\subsection{\fbox{$y=\ln(z)$}  Partitions with Plancherel measure} \label{sec:planch}

For a partition $\lambda_1\geq\lambda_2\geq...\geq\lambda_N\geq 0$ define $|\lambda|=\sum\lambda_i$ and $n(\lambda)=\#\{i:\lambda_i\neq 0\}$.  The Plancherel measure on partitions of size $|\lambda|=N$ uses the dimensions of irreducible representations of $S_N$, labeled by partitions $\lambda$, satisfying $\sum_{|\lambda|=N}\dim(\lambda)^2=N!$.  The partition function 
\[Z_N(Q)=\sum_{n(\lambda)\leq N}\left(\frac{\dim\lambda}{|\lambda|!}\right)^2Q^{2|\lambda|}\]
 is related to the symplectic invariants of the curve \cite{EynOrd}
\begin{equation}  \label{eq:lnz}
C=\begin{cases}x=z+1/z\\ 
y=\ln{z}
\end{cases}
\end{equation}
via the asymptotic expansion as $Q\to\infty$
\[\ln Z_N(Q)=\sum_gQ^{2-2g}F^g.\]
For $N\to\infty$, $\exp(-Q^2)Z_N(Q)\to 1$ so 
\[F^g=\delta_{g,0}.\]
\begin{table}[h!]  \label{tab:poly3}
\caption{$y=\ln{z}$}
\begin{spacing}{1.4}  
\begin{tabular}{||l|c|c|c||} 
\hline\hline

{\bf g} &{\bf n}&\# odd $b_i$&$N^g_n(b_1,...,b_n)$\\ \hline

0&3&0,2&$0$\\ \hline
0&3&1,3&$1$\\ \hline
1&1&0&$0$\\ \hline
1&1&1&$\frac{1}{48}(b_1^2-3)$\\ \hline
0&4&0,4&$\frac{1}{4}(b_1^2+b_2^2+b_3^2+b_4^2)$\\ \hline
0&4&1,3&$0$\\ \hline
0&4&2&$\frac{1}{4}(b_1^2+b_2^2+b_3^2+b_4^2-2)$\\ \hline
1&2&0&$\frac{1}{384}(b_1^2+b_2^2-8)(b_1^2+b_2^2)$\\ \hline
1&2&1&$0$\\ \hline
1&2&2&$\frac{1}{384}(b_1^2+b_2^2-6)(b_1^2+b_2^2-2)$\\ \hline
2&1&0&$0$\\ \hline
2&1&1&$\frac{1}{2^{16}3^35}(b_1^2-1)^2(5b_1^4-186b_1^2+1605)$\\
\hline\hline
\end{tabular} 
\end{spacing}
\end{table}

The curve (\ref{eq:lnz}) can be seen as the $m=0$ case of the previous family of examples (\ref{eq:mon}). 
The polynomials satisfy the following recursions relations.
\begin{align*}
N^g_{n+1}(0,b_1,\dots b_n) &=\sum_{j=1}^n\sum_{k=1}^{b_j}kN^g_n(b_1,\dots,b_n)|_{b_j=k}
\\
N^g_{n+1}(1,b_1,\dots,b_n)
&=\sum_{j=1}^n\sum_{k=1}^{b_j}kN^g_n(b_1,\dots,b_n)|_{b_j=k}+\frac{\chi-|b|}{2}N^g_n(b_1,\dots, b_n)\end{align*}
for $\chi=2-2g-n$ and $|b|=\sum_{j=1}^nb_j$.  

In all calculated cases, for $x=z+1/z$ and $y=z^m/m$ or $y=\ln z$, the genus 0 invariants $N^0_n$ take integral values.  When $m=1$, the genus 0 invariants are proven to be integral, and based on observation it seems reasonable to conjecture that the genus 0 invariants are integral for $m>1$.  This lends further evidence that there may be an underlying enumeration problem.  In general for Gromov-Witten or moduli space calculations, the invariants are rational, and integral in genus 0.

\subsubsection{\fbox{$y=\ln(z)+cz$}  Expectation values of functions on partitions}
The following natural function on partitions 
\[C_k(\lambda)=\sum_{i=1}^N\left(\lambda_i-i+\frac{1}{2}\right)^k-\left(-i+\frac{1}{2}\right)^k+(1-2^{-k})\zeta(-k)\]
is called a Casimir in \cite{EynOrd}, and a shifted symmetric polynomial, notated by $p_k$, in \cite{OPaGro}.  The expectation values of $C_2$ with respect to the Plancherel measure on partitions are encoded in a generating function  
which gives rise to the following curve \cite{EynOrd} (after making some coordinate changes.)
\begin{equation}
C=\begin{cases}x=z+1/z\\ 
y=\ln{z}+cz
\end{cases}
, \quad  \text{$c$ is a constant}
\end{equation}
The curve has a symmetry
\[(z,c)\mapsto(-z,-c)\]
since $x\mapsto -x$ and $y\mapsto y+\ln(-1)$ which implies that when $n+k$ is even $N^g_{n,k}$ is a function of $c^2$ and when $n+k$ is odd $cN^g_{n,k}$ is a function of $c^2$.  In particular, $F^g$ is a function of $c^2$.

In the limit $c\to\infty$, $y(z)/c\to z$ (uniformly in neighbourhoods of $\pm 1$) and this allows us to deduce that the symplectic invariants for $g>1$ are asymptotic to the symplectic invariants for $y=z$, i.e. 
\[F^g\sim\chi(\modm_g)c^{2-2g},\quad c\to\infty.\]  
The polynomials interpolate between the $y=\ln{z}$ and $y=z$ cases.  More precisely, 
\[\lim_{c\to0}N^g_n=N^g_n[y(z)=\ln{z}],\quad\quad\lim_{c\to\infty}c^{2g-2+n}N^g_n=N^g_n[y(z)=z].\]

\begin{table}[h!]  \label{tab:poly4}
\caption{$y=\ln{z}+cz$,\quad $D=y'(1)y'(-1)=c^2-1$}
\begin{spacing}{1.4}  
\begin{tabular}{||l|c|c|c||} 
\hline\hline

{\bf g} &{\bf n}&\# odd $b_i$&$N^g_n(b_1,...,b_n)$\\ \hline

0&3&0,2&$\frac{c}{D}$\\ \hline
0&3&1,3&$\frac{-1}{D}$\\ \hline
1&1&0&$\frac{c}{48D^2}(Db_1^2-4c^2+2)$\\ \hline
1&1&1&$\frac{1}{48D^2}(-Db_1^2+5c^2-3)$\\ \hline
0&4&0,4&$\frac{1}{4D^3}((c^4-1)(b_1^2+b_2^2+b_3^2+b_4^2)-4c^4)$\\ \hline
0&4&1,3&$\frac{-c}{2D^3}(D(b_1^2+b_2^2+b_3^2+b_4^2)-3c^2+1)$\\ \hline
0&4&2&$\frac{1}{4D^3}((c^4-1)(b_1^2+b_2^2+b_3^2+b_4^2)-2c^4-4c^2+2)$\\ \hline
1&2&0&$\frac{1}{384D^4}((c^2+1)D^2(b_1^2+b_2^2)^2+32c^6$\\ &&&$
-4D(3c^4+3c^2-2)(b_1^2+b_2^2))$\\ \hline
2&1&0&$\frac{c}{2^{16}3^35D^7}(5(c^2+3)D^4b_1^8-8(39c^4+136c^2-59)D^3b_1^6$\\ &&&$+16(357c^6+1417c^4-1020c^2+254)D^2b_1^4$\\ &&&$-64(572c^8+2192c^6-1739c^4+806c^2-151)Db_1^2$\\ &&&$+6144c^7(12c^4+21c^2+2))$\\
\hline\hline
\end{tabular} 
\end{spacing}
\end{table}

\subsubsection{$q$-deformed partition}
The Plancherel weights $P(\lambda)=(\dim{\lambda}/|\lambda|!)^2$ used in Section~\ref{sec:planch} can be replaced by $P_q(\lambda)=(\dim_q{\lambda}/[|\lambda|]!)^2$, $q$-deformed Plancherel weights  where $q$-numbers are used in place of integer combinatorial expressions.  We will not give the details here since we will only use the spectral curve calculated in \cite{EynOrd} for this case.
\begin{equation}
C=\begin{cases}x=(1-\frac{z}{z_0})(1-\frac{1}{zz_0})\\ 
y=\frac{1}{x(z)}\left(-\ln{z}+\frac{1}{2}\ln(\frac{1-\frac{z}{z_0}}{1-\frac{1}{zz_0}})\right)
\end{cases},
\end{equation}
The curve has a symmetry
\[(z,z_0)\mapsto(-z,-z_0)\]
which implies that when $k$ is even $N^g_{n,k}$ is a function of $z_0^2$ and when $k$ is odd $z_0N^g_{n,k}$ is a function of $z_0^2$.  In particular, $F^g$ is a function of $z_0^2$.   Put $z_0^2=1-e^t$ so $F^g$ is a function of $e^{t}$.

It was proven in \cite{EynOrd} that
\[ F^g=c_g+\sum_{d>0}|\chi(\modm_g)|\frac{d^{2g-3}}{(2g-3)!}e^{-td}\]
for a constant $c_g$.  The Euler characteristic $\chi(\modm_g)$ arises in some sense by coincidence as an expression involving Bernoulli numbers, with no geometric relation to the moduli space.  The asymptotics of $F^g$ as $t\to 0$ can explain the appearance of $\chi(\modm_g)$ geometrically.  As $t\to 0$, or equivalently $z_0\to 0$,
\[x\sim\frac{1}{z_0^2}-\frac{1}{z_0}\left(z+\frac{1}{z}\right)\equiv-\frac{1}{z_0}\left(z+\frac{1}{z}\right),\quad y\sim \frac{z_0^3}{2}\left(z-\frac{1}{z}\right)\equiv z_0^3z\]
where $x$ has been shifted by a constant and $y$ has been adjusted by a linear function in $x$ which does not affect the invariants.  Under this scaling the invariants scale by
\[\omega^g_n\sim (-1)^{2-2g-n}z_0^{4-4g-2n}\omega^g_n[y=z]\]
and in particular 
\[F^g\sim z_0^{4-4g}F^g[y=z] =|\chi(\modm_g)|t^{2-2g},\quad t\to 0\]
where $F^g[y=z]=\chi(\modm_g)$ follows from the relation of the case $x=z+1/z$, $y=z$ to counting lattice points in the moduli space of curves described in Section~\ref{sec:lattice}.

\begin{table}[h]  \label{tab:poly5}
\caption{$q$-deformed partitions}
\begin{spacing}{1.4}  
\begin{tabular}{||l|c|c|c||} 
\hline\hline

{\bf g} &{\bf n}&\# odd $b_i$&$N^g_n(b_1,...,b_n)$\\ \hline

0&3&0,2&$\frac{-1}{z_0^2}(3z_0^2+1)$\\ \hline
0&3&1,3&$\frac{1}{z_0}(z_0^2+3)$\\ \hline
1&1&0&$\frac{-1}{48z_0^2}((3z_0^2+1)b_1^2-2z_0^2-4)$\\ \hline
1&1&1&$\frac{1}{48z_0}((z_0^2+3)b_1^2-3z_0^2-3)$\\ \hline
0&4&0,4&$\frac{1}{4z_0^4}((1+z_0^2)(z_0^4+14z_0^2+1)(b_1^2+b_2^2+b_3^2+b_4^2)-4)$\\ \hline
0&4&1,3&$\frac{-1}{2z_0^3}((z_0^2+3)(3z_0^2+1)(b_1^2+b_2^2+b_3^2+b_4^2)-z_0^4+4z_0^2-3)$\\ \hline
1&2&0&$\frac{1}{384z_0^4}((z_0^2+1)(z_0^4+14z_0^2+1)(b_1^2+b_2^2)^2-32z_0^2+32$\\ &&&$-4(2z_0^6+3z_0^4+8z_0^2+3)(b_1^2+b_2^2))$\\ \hline
2&1&0&$\frac{1}{2^{16}3^35z_0^6}(-5(3z_0^2+1)(3z_0^6+27z_0^4+33z_0^2+1)b_1^8$\\ &&&$+(952z_0^8+224z_0^6+336z_0^4+3808z_0^2+312)b_1^6$\\ &&&$+(-3680z_0^8+11360z_0^6+20688z_0^4-13952z_0^2-5712)b_1^4$\\ &&&$+(-5696z_0^8+5120z_0^6-38592z_0^4-21248z_0^2+36608)b_1^2$\\ &&&$-36864(z_0^2-2)(z_0^2-1))$\\
\hline\hline
\end{tabular} 
\end{spacing}
\end{table}

\subsection{\fbox{$y=\displaystyle{\frac{t}{t-\gamma z}}$} Discrete surfaces} \label{sec:disc}
The example in Section~\ref{sec:lattice} produced a generating function for counting connected orientable discrete surfaces of genus $g$ with $n$ polygonal faces and a marked edge on each face.  That is a special case of more general connected orientable discrete surfaces of genus $g$ with $n$ marked polygonal faces---each containing a marked edge---together with unmarked faces of perimeter greater than 2, with $n_i$ of perimeter $i$ for $i\geq 3$.  A spectral curve for this problem is constructed in \cite{EOrAlg}, where the expansion of $\omega^g_n$ in $x$ around $x=\infty$ gives a generating function that counts these discrete surfaces.  The coefficient of $\prod x_i^{-(l_i+1)}t_3^{n_3}...t_d^{n_d}$ in the expansion of $\omega^g_n$ counts the surfaces consisting of $t_3$ (unmarked) triangles, $t_4$ squares, ... and $l_i$-sided marked polygons, $i=1,...,n$.  If we set $t_k=-1$ for all $k\geq 3$ then the spectral curve is given by
\begin{equation}
C=\begin{cases}x=-2t+\gamma(z+1/z)\\ 
y=\frac{t}{t-\gamma z}
\end{cases}
, \quad \gamma^2 = t(t+1)
\end{equation}
and the generating function gives an alternating sum of numbers of discrete surfaces.

The curve has a symmetry
\[ (z,\gamma)\mapsto(-z,-\gamma)\]
which implies that when $k$ is even $N^g_{n,k}$ is a function of $\gamma^2$ and when $k$ is odd $\gamma N^g_{n,k}$ is a function of $\gamma^2$.  In particular, $F^g$ is a function of $\gamma^2$, hence a function of $t$ as shown below.  

The curve also has a less obvious symmetry
\[ (t,\gamma)\mapsto(-t-1,-\gamma)\]
which takes $x\mapsto 2-x$, and 
\[y\mapsto\frac{t+1}{t+1-\gamma z}=\frac{\gamma^2}{\gamma^2-t\gamma z}=1+\frac{tz}{\gamma-tz}\sim1+\frac{tz^{-1}}{\gamma-tz^{-1}}=1-y\]
where the $\sim$ sign corresponds to reparametrising the curve by $z\mapsto z^{-1}$.  This implies that under this symmetry $N^g_n$ is invariant, respectively skew invariant, when $n$ is even, respectively $n$ is odd.  In particular, $F^g$ is invariant under $t\mapsto-t-1$.

The symplectic invariants of this curve conjecturally use the $n=0$ case of the enumerative problem above---an alternating count of surfaces consisting of $t_3$ triangles, $t_4$ squares, ... and no marked polygons.  The solution of the $n=0$ problem is
\[\sum_{k>0}\chi(\modm_{g,k})\frac{t^k}{k!}\]
where $\chi(\modm_{g,k})$ is the orbifold Euler characteristic of the moduli space of genus $g$ curves with $k$ labeled points.  This uses a cell decomposition of the moduli space where each polygon labels a cell.  The alternating sum over all cells gives the Euler characteristic $\chi(\modm_{g,k})/k!$ of the moduli space of genus $g$ curves with $k$ (unlabeled) points.

As $t\to0$,  
\[ x\sim t^{1/2}(z+1/z),\quad y\sim-t^{1/2}/z\] 
hence the curve behaves asymptotically like $x=z+1/z$, $y=-1/z$, or equivalently $y=z(=-1/z+x)$ which is the example in Section~\ref{sec:lattice} with $F^g[y=z]=\chi(\modm_g)$.  Hence $F^g\sim\chi(\modm_g)t^{2-2g}$ as $t\to 0$.  Adding the two contributions gives an asymptotic formula which is conjecturally exact:
\begin{align*}
F^g=&\chi(\modm_g)t^{2-2g}+\sum_{k>0}\chi(\modm_{g,k})\frac{t^k}{k!}\\
=&\chi(\modm_g)\left(t^{2-2g}+(-1)^k\binom{2g+k-3}{2g-3}t^k\right)\\
=&\chi\left(\modm_g)(t^{2-2g}+(t+1)^{2-2g}\right)
\end{align*}
where $\chi(\modm_g)=\zeta(1-2g)/(2-2g)$.  

It is interesting that the symmetry $t\mapsto-t-1$ of $F^g$ (proved {\em a priori}) suggests that the asymptotic part $t^{2-2g}$ determines, and is determined by, the solution of the $n=0$ enumerative problem,  $(t+1)^{2-2g}$.

\subsubsection{Relation to counting surfaces}
As discussed above, the expansion in $x$ of $\omega^g_n$ around $x=\infty$ gives a generating function for counting discrete surfaces, where the coefficient of $\prod x_i^{-(l_i+1)}t_3^{n_3}...t_d^{n_d}$ in the expansion counts surfaces consisting of $t_3$ (unmarked) triangles, $t_4$ squares, ... and $l_i$-sided marked polygons, $i=1,...,n$.  We can find this coefficient explicitly by evaluating 
\begin{eqnarray*} 
&&T^g_{l_1,...,l_n}:=(-1)^n\res{x_1=\infty}\dots \res{x_n=\infty} x_1^{l_1}\dots x_n^{l_n}\omega^g_n\\
&&=(-1)^n\prod_{i=1}^n \res{z_i=\infty} \gamma^{l_i}(z_i+\frac{1}{z_i})^{l_i}\sum_{b_1,...,b_n=1}^\infty N^g_n(b_1,...,b_n)\prod_{i=1}^nb_iz_i^{b_i-1}dz_i\\
&&=(-1)^n\prod_{i=1}^n \res{z_i=0} \gamma^{l_i}(\frac{1}{z_i}+z_i)^{l_i}\sum_{b_1,...,b_n=1}^\infty N^g_n(b_1,...,b_n)\prod_{i=1}^nb_iz_i^{b_i-1}dz_i\\
&&=(-1)^n\prod_{i=1}^n \res{z_i=0} \gamma^{l_i}\hspace{-3mm}\sum_{k_1,...,k_n=0}^{l_i}\sum_{b_1,...,b_n=1}^\infty N^g_n(b_1,...,b_n)\prod_{i=1}^nb_i\binom{l_i}{k_i}z_i^{l_i-2k_i+b_i-1}dz_i\\
&&=(-1)^n\gamma^{\sum l_i} \sum_{k_i> \frac{l_i}{2}}^{l_i}N^g_n(2k_1-l_1,...,2k_n-l_n)\prod_{i=1}^n (2k_i-l_i)\binom{l_i}{k_i}\\
\end{eqnarray*}
where the $N^g_n$'s associate to the spectral curve computed for particular choices of $t_k$.
In our example, we have set $t_3=t_4=...=-1$, so that $T^g_{l_1,...,l_n}$ counts the bias of discrete surfaces having even or odd numbers of faces:

$$T^g_{l_1,...,l_n}=\sum_{v=1}^\infty t^v\sum_{S\in M^g_n(v,l_1,...,l_n)} \frac{(-1)^{\text{\# unmarked polygons in $S$}}}{|Aut(S)|}$$
where $M^g_n(v,l_1,...,l_n)$ is the set of connected, orientable, discrete surfaces of genus $g$, constructed by connecting $n$ marked polygons of perimeter $l_1,...,l_n$ and any finite number of unmarked polygons using only $v$ vertices.  From \cite{EOrAlg} this is a finite set.  For example, 
\begin{eqnarray*}
T^{(2)}_l=\begin{cases}
0 \indent &0\leq l\leq 4\\
-8t-8t^2 \indent &l=5\\
36t+108t^2+72t^3 \indent&l=6\\
-49t-490t^2-882t^3-441t^4 \indent &l=7
\end{cases}\end{eqnarray*}
{\em Remark.} It seems that $T^g_{l_1,...,l_n}$ is polynomial in $t$.  Equivalently, most of the coefficients of powers of $t$ vanish.  This means that besides finitely many exceptions for a small number of vertices, there is a duality between discrete surfaces with an even and odd number of faces.  Furthermore, the following vanishing result shows that there are no exceptions when the $b_i$ are positive and small enough.
$$N^g_n(b_1,...,b_n)=0 \indent \text{for}\indent \sum_{i=1}^n b_i <2g+n,\quad b_i>0.$$
The vanishing result is equivalent to the fact that $\omega^g_n(z_1,...,z_n)$ vanishes at $z_i=0$ with homogeneous degree $2g$ which can be proved inductively from the recursive definition.  A similar vanishing result holds for the $y=z$ case---see Proposition~\ref{th:van2}.

\begin{table}[h]  \label{tab:poly6}
\caption{Discrete surfaces}
\begin{spacing}{1.4}  
\begin{tabular}{||l|c|c|c||} 
\hline\hline

{\bf g} &{\bf n}&\# odd $b_i$&$N^g_n(b_1,...,b_n)$\\ \hline

0&3&0,2&$\frac{1+2t}{\gamma^2}$\\ \hline
0&3&1,3&$\frac{-2}{\gamma}$\\ \hline
1&1&0&$\frac{1+2t}{48\gamma^2}(b_1^2-4)$\\ \hline
1&1&1&$\frac{-1}{24\gamma}(b_1^2-1)$\\ \hline
0&4&0,4&$\frac{1}{4\gamma^4}((8\gamma^2+1)(b_1^2+b_2^2+b_3^2+b_4^2)-8\gamma^2-4)$\\ \hline
0&4&1,3&$\frac{-1}{\gamma}(1+2t)(b_1^2+b_2^2+b_3^2+b_4^2-1)$\\ \hline
0&4&2&$\frac{1}{4\gamma^4}((8\gamma^2+1)(b_1^2+b_2^2+b_3^2+b_4^2)-8\gamma^2-2)$\\ \hline
1&2&0&$\frac{1}{384\gamma^4}(b_1^2+b_2^2-4)((8\gamma^2+1)(b_1^2+b_2^2)-16\gamma^2-8)$\\ \hline
1&2&1&$\frac{-1}{96\gamma^3}(1+2t)(b_1^2+b_2^2-1)(b_1^2+b_2^2-5)$\\ \hline
1&2&2&$\frac{1}{384\gamma^4}(b_1^2+b_2^2-2)((8\gamma^2+1)(b_1^2+b_2^2)-32\gamma^2-10)$\\ \hline
2&1&0&$\frac{1+2t}{2^{16}3^35\gamma^6}(b_1^2-4)(b_1^2-16)((80\gamma^2+5)b_1^4-(608\gamma^2+212)b_1^2$\\&&&$+1152\gamma^2+1152)$\\
\hline
2&1&1&$\frac{-1}{2^{15}3^35\gamma^5}(b_1^2-1)(b_1^2-9)((80\gamma^2+15)b_1^4-(1408\gamma^2+438)b_1^2$\\&&&$+3632\gamma^2+1575)$\\
\hline\hline
\end{tabular} 
\end{spacing}
\end{table}

\subsubsection{Quadrangulations}
One can enumerate discrete surfaces consisting of quadrilaterals by setting $t_k=0$ for $k\neq 4$ in the enumeration of discrete surfaces defined in Section~\ref{sec:disc} counts.  (There we set $t_k=-1$ for $k\geq 3$.)  The spectral curve for this problem is given in \cite{EOrAlg}
\begin{equation}
C=\begin{cases}x=z+1/z\\ 
y=tz-t_4\gamma^4z^3
\end{cases}
, \quad \gamma^2 = \frac{1-\sqrt{1-12tt_4}}{6t_4}
\end{equation}
In this case the recursion relations between polynomials are
\[
tN^g_{n+1}(1,b_S)-3t_4\gamma^4N^g_{n+1}(3,b_S) =\sum_{j=1}^n\sum_{k=1}^{b_j}kN^g_n(b_S)|_{b_j=k}
\]
\begin{align*}
2(t-t_4\gamma^4)N^g_{n+1}(2&,b_S)-4t_4\gamma^4N^g_{n+1}(4,b_S)\\
&=2\sum_{j=1}^n\sum_{k=1}^{b_j}kN^g_n(b_S)|_{b_j=k}-\sum_{j=1}^nb_jN^g_n(b_S)\nonumber\end{align*}
\begin{align*}  (t-t_4\gamma^4)N^g_{n+1}(1,b_S)&+3(t-2t_4\gamma^4)N^g_{n+1}(3,b_S)\\
&+5t_4\gamma^4N^g_{n+1}(5,b_S)
=\sum_{j=1}^n\sum_{k=1\pm b_j}kN^g_n(b_S)|_{b_j=k}\nonumber
\end{align*}
\[
-tN^g_{n+1}(0,b_S)+(t+t_4\gamma^4)N^g_{n+1}(2,b_S)-t_4\gamma^4N^g_{n+1}(4,b_S)=(2g-2+n)N^g_n(b_S)
\]
where $b_S=(b_1,\dots,b_n)$.

In the following table $y'(1)=t-3t_4\gamma^4$.
\begin{table}[h]  \label{tab:poly7}
\caption{Quadrangulations}
\begin{spacing}{1.4}  
\begin{tabular}{||l|c|c|c||} 
\hline\hline

{\bf g} &{\bf n}&\# odd $b_i$&$N^g_n(b_1,...,b_n)$\\ \hline

0&3&0,2&$\frac{1}{y'(1)}$\\ \hline
0&3&1,3&$0$\\ \hline
1&1&0&$\frac{1}{48y'(1)^2}(y'(1)b_1^2+36t_4\gamma^4-4t)$\\ \hline
1&1&1&$0$\\ \hline
0&4&0,4&$\frac{1}{4y'(1)^3}(y'(1)(b_1^2+b_2^2+b_3^2+b_4^2)+36t_4\gamma^4-4t)$\\ \hline
0&4&1,3&$0$\\ \hline
0&4&2&$\frac{1}{4y'(1)^3}(y'(1)(b_1^2+b_2^2+b_3^2+b_4^2)+30t_4\gamma^4-2t)$\\ \hline

2&1&0&$\frac{1}{2^{15}3^35y'(1)^7}(5y'(1)^4b_1^8-24y'(1)^3(13t-155t_4\gamma^4)b_1^6$\\&&& $+48y'(1)^2(119t^2-2090t_4\gamma^4t+17295t_4^2\gamma^8)b_1^4$\\&&& $-256y'(1)(143t^3-2793t^2t_4\gamma^4+25857tt_4\gamma^8-237735t_4^3\gamma^{12})b_1^2$\\&&& $+73728t^4-1548288t_4\gamma^4t^3+13934592t_4^2\gamma^8t^2$\\&&&$-36495360tt_4^3\gamma^{12}+1134673920t_4^4\gamma^{16})$\\
\hline
2&1&1&$0$\\
\hline\hline
\end{tabular} 
\end{spacing}
\end{table}

\clearpage

\end{document}